\numberwithin{equation}{section}
\theoremstyle{plain}
\newtheorem{thm}{Theorem}[section]
\newtheorem{prop}{Proposition}[section]
\newtheorem{rem}{Remark}[section]
\newtheorem{cor}{Corollary}[section]
\newcommand{\Lower}[2]{\smash{\lower #1 \hbox{#2}}}
\newcommand{\ben}{\begin{enumerate}}
\newcommand{\een}{\end{enumerate}}
\newcommand{\bi}{\begin{itemize}}
\newcommand{\ei}{\end{itemize}}
\newcommand{\Hfun}[5]
{H_{#2}^{#1} \left[#3\left|
\begin{array}{l} #4 \vspace*{.05in}\\ #5
\end{array}\right.\right]} 
\newcommand{\PitmanDiagram}[5]
{#1 \qquad
\begin{array}{c}
#2\\
\xrightarrow{#5}\\
\xleftarrow{#5}\\
#4
\end{array}
\qquad #3
}
\begin{document}

\begin{frontmatter}
\title{Inverse Clustering of Gibbs Partitions via independent fragmentation and dual dependent coagulation operators}
\runtitle{Fragmenting Gibbs Partitions}
\runauthor{M.-W. Ho, L.~F. James and J.~W. Lau}

\begin{aug}
\author[A]{\fnms{Man-Wai}~\snm{Ho}\ead[label=e1]{imjasonho@ust.hk}},
\author[A]{\fnms{Lancelot F.}~\snm{James}\ead[label=e2]{lancelot@ust.hk}
}
\and
\author[B]{\fnms{John W.}~\snm{Lau}\ead[label=e3]{john.w.lau@googlemail.com}}
\address[A]{{Department of Information Systems, Business Statistics
and Operations Management}, The Hong Kong University of Science and Technology\printead[presep={,\ }]{e1,e2}}

\address[B]{Department of Mathematics and Statistics, University of Western Australia\printead[presep={,\ }]{e3}}
\end{aug}

\begin{abstract}
Gibbs partitions of the integers generated by stable subordinators of index $\alpha\in(0,1)$ form remarkable classes of random partitions where in principle much is known about their properties, including practically effortless obtainment of otherwise complex asymptotic results potentially relevant to applications in  general combinatorial stochastic processes, random tree/graph growth models and Bayesian statistics. This class includes the well-known models based on the two-parameter Poisson-Dirichlet distribution which forms the bulk of explicit applications. This work continues efforts to provide interpretations for a larger classes of Gibbs partitions by embedding important operations within this framework. Here we address the formidable problem of extending the dual, infinite-block, coagulation/fragmentation results of Pitman~\cite{Pit99Coag}, where in terms of coagulation they are based on independent two-parameter Poisson-Dirichlet distributions, to all such Gibbs (stable Poisson-Kingman) models. Our results create nested families of Gibbs partitions, and corresponding mass partitions, over any $0<\beta<\alpha<1.$ We primarily focus on the fragmentation operations, which remain independent in this setting, and corresponding remarkable calculations for Gibbs partitions derived from that operation. We also present definitive results for the dual coagulation operations, now based on our construction of dependent processes, and demonstrate its relatively simple application in terms of Mittag-Leffler and generalized gamma models. The latter demonstrates another approach to recover the duality results in~\cite{Pit99Coag}. 
\end{abstract}

\begin{keyword}[class=AMS]
\kwd[Primary ]{60C05, 60G09}
\kwd[; secondary ]{60G57,60E99}
\end{keyword}

\begin{keyword}
\kwd{Brownian and Bessel processes, coagulation/fragmentation duality, Gibbs partitions, Poisson Dirichlet distributions, stable Poisson-Kingman distributions}
\end{keyword}

\end{frontmatter}

\section{Introduction}
Gibbs (random) partitions, as developed in~\cite[Theorem 8]{Pit03} and subsequently~\cite{Gnedin06,Pit06}, say, $\{A_{1},\ldots,A_{K_{n}}\}$ of the integers $[n]=\{1,2,\ldots,n\},$ where $K_{n}\leq n,$ form remarkable classes of infinitely exchangeable random partitions~\cite[Section~2.2]{Pit06}, 
whose distribution is defined consistently for each $n\in \mathbb{N}=\{1,2,\ldots\}$ as the restriction of an exchangeable partition probability function~(EPPF) on $\mathbb{N}$ that has the distinguishing feature of having Gibbs (product) form for each $n.$ In principle, the Gibbs form is attractive in terms of practical implementation, and much is known about the properties of this class, including practically effortless obtainment of otherwise complex asymptotic results potentially relevant to applications in combinatorial stochastic processes, random tree/graph growth models and Bayesian statistics. See, for example,~\cite{Bacallado,BerFrag,CaronFox,Deblasi,Feng,GoldschmidtHaas2015,
Haas,Haas2,HJL2,SvanteUrn,SvanteKuba,Pek2017jointpref,PYaku,
RembartWinkel2016a,Wood}. 

The class is defined for each $\alpha\in(-\infty,1),$ here we shall focus on the case where $\alpha\in(0,1),$ which may be generated by an $\alpha$-stable subordinator in relation to the work~\cite{PPY92,PY97,Pit03,Pit06} on excursion lengths of Bessel processes. In particular, let $\mathcal{B}:=(B_{t}, t>0)$ denote a strong Markov process on
$\mathbb{R}$ whose normalized ranked lengths of excursions,
$(P_{i})\in \mathcal{P}_{\infty}=\{\mathbf{s}=(s_{1},s_{2},\ldots):s_{1}\ge
s_{2}\ge\cdots\ge 0 {\mbox { and }} \sum_{k=1}^{\infty}s_{k}=1\},
$ follow a Poisson Dirichlet law with parameters $(\alpha,0)$, for
$0<\alpha<1,$ as discussed in Pitman and Yor~\cite{PY97}. Denote
this law as $\mathrm{PD}(\alpha,0)$ on the space of mass partitions summing to one, $\mathcal{P}_{\infty}.$ Let $(L_{t}; t>0)$ denote
its local time starting at $0,$ and let $T_{\alpha}(\ell)=\inf\{t:L_{t}>\ell\}, \ell\ge 0$, denote its inverse local time. In this
case, $\mathbf{T}_{\alpha}:=(T_{\alpha}(t),t\ge 0)$ is an $\alpha$-stable subordinator. For each $s,$ 
$T_{\alpha}(s)\overset{d}=s^{
{1}/{\alpha}}T_{\alpha}(1),$ where $T_{\alpha}(1):=T_{\alpha}$ is the inverse local time at $1$ with density $f_{\alpha}(t)$ and Laplace transform $\mathbb{E}[{\mbox e}^{-\lambda T_{\alpha}}]={\mbox e}^{-\lambda^{\alpha}}.$ Due to the scaling identity
(see~\cite{PY92}),
\begin{equation}
L_{1}\overset{d}=\frac{L_{t}}{t^{\alpha}}\overset{d}=\frac{s}{[{{T_{\alpha}(s)]}^{\alpha}}}\overset{d}=T^{-\alpha}_{\alpha},
\label{scaling}
\end{equation}
the local time up to time $1,$ $L_{1}\overset{d}=T^{-\alpha}_{\alpha},$ follows a Mittag-Leffler distribution with density $g_{\alpha}(z):=f_{\alpha}(z^{-
1/\alpha})z^{-
{1}/\alpha-1}/\alpha,$ satisfying
\begin{equation}
L_{1}:=\Gamma(1-\alpha)^{-1}\lim_{\epsilon\rightarrow
0}\epsilon^{\alpha}|\{i:P_{i}\ge \epsilon\}|~\mathrm{a.s.}
\label{inverselocaltime}
\end{equation}
The process $F_{\alpha,0}(y):=\sum_{k=1}^{\infty}P_{k}\mathbb{I}_{\{U_{k}\leq y\}},$
for $(U_{k})\overset{iid}\sim\mathrm{Uniform}[0,1]$ independent of $(P_{i})\sim \mathrm{PD}(\alpha,0)$, is called a $\mathrm{PD}(\alpha,0)$ exchangeable bridge, see \cite{BerFrag} for this terminology, with the equivalence
$$
F_{\alpha,0}(y)\overset{d}=\frac{T_{\alpha}(y)}{T_{\alpha}(1)}.
$$
In turn, sampling $X_{1},\ldots,X_{n}|F_{\alpha,0}\overset{iid}\sim F_{\alpha,0},$
that is, $(X_{i}\overset{d}=F_{\alpha,0}^{-1}(U'_{i});$ $U'_{i}\overset{iid}\sim\mathrm{Uniform}[0,1], i\in [n])$, produces $K_{n}=k$ unique values $(Y_{1},\ldots,Y_{k})\overset{iid}\sim\mathrm{Uniform}[0,1]$ and a $\mathrm{PD}(\alpha,0)$-partition of $[n],$ $\{A_{1},\ldots,A_{k}\},$ where $A_{j}=\{i:X_{i}=Y_{j}\},$
with size $|A_{j}|=n_{j},$ for $j=1,\ldots,k,$ with an EPPF denoted by the $\mathrm{PD}(\alpha,0)-\mathrm{EPPF}$,
\begin{equation}
p_{\alpha}(n_{1},\ldots,n_{k})=\frac{\alpha^{k-1}\Gamma(k)}{\Gamma(n)}\prod_{j=1}^{k}(1-\alpha)_{n_{j}-1},
\label{canonEPPF}
\end{equation}
where, for any non-negative integer $x$, $(x)_n = x(x+1)\cdots(x+n-1) = {\Gamma(x+n)}/{\Gamma(x)}$ denotes the Pochhammer symbol. From this, the probability of the number of blocks $K_{n}=k$ can be expressed as $\mathbb{P}_{\alpha,0}^{(n)}(k):=\mathbb{P}_{\alpha,0}(K_n = k)=
{\alpha^{k-1}\Gamma(k)} S_\alpha(n,k) / {\Gamma(n)}$, where $S_\alpha(n,k) =  
[{\alpha^k k!}]^{-1} \sum_{j=1}^k (-1)^j \binom{k}{j} (-j\alpha)_n$ denotes the generalized Stirling number of the second kind.

\subsection{$\alpha$-stable Poisson-Kingman distributions and Gibbs partitions}
Now conditioning $(P_{i})$ on $T_{\alpha}=t$~(or $L_{1}=t^{-\alpha}$) leads to the distribution of $(P_{i})|T_{\alpha}=t\sim \mathrm{PD}(\alpha|t)$, and for $h(t)$ a non-negative function with $\mathbb{E}[h(T_{\alpha})]=1,$ one may, as in~\cite{Pit03}, define the $\alpha$-stable Poisson-Kingman distribution with mixing distribution $\nu(dt)/dt=h(t)f_{\alpha}(t),$ and write $(P_{\ell})\sim \mathrm{PK}_{\alpha}(h\cdot f_{\alpha})$ defined as 
$$
\mathrm{PK}_{\alpha}(h\cdot f_{\alpha}):=\int_{0}^{\infty}\mathrm{PD}(\alpha|t)h(t)f_{\alpha}(t)dt=\int_{0}^{\infty}\mathrm{PD}(\alpha|s^{-\frac{1}{\alpha}})h(s^{-\frac{1}{\alpha}})g_{\alpha}(s)ds.
$$
Setting $h(t)=t^{-\theta}/\mathbb{E}[T^{-\theta}_{\alpha}]$, for $\theta>-\alpha$, leads to $(P_{\ell})\sim \mathrm{PD}(\alpha,\theta),$ corresponding to the important two-parameter Poisson-Dirichlet distribution as described in~\cite{PPY92, Pit96,Pit03,Pit06,PY97}, whose size biased re-arrangement, say, $(\tilde{P}_{\ell})\sim \mathrm{GEM}(\alpha,\theta),$ where $\mathrm{GEM}(\alpha,\theta)$ is the two-parameter Griffiths-Engen-McCloskey distribution, and is widely used in applications~\cite{BerFrag,IJ2001,PPY92,Pit96,Pit06}. 
The inverse local time at $1$ of a process with lengths $(P_{\ell})\sim \mathrm{PD}(\alpha,\theta),$ say, $T_{\alpha,\theta},$ has density $f_{\alpha,\theta}(t)=t^{-\theta}f_{\alpha}(t)/\mathbb{E}[T^{-\theta}_{\alpha}]$, and the corresponding local time at $1$ or its $\alpha$-diversity, $T^{-\alpha}_{\alpha,\theta}\sim \mathrm{ML}(\alpha,\theta),$ denoting that it has a
 $(\alpha,\theta)$ generalized Mittag-Leffler distribution with density $g_{\alpha,\theta}(s)=s^{\theta/\alpha}g_{\alpha}(s)/\mathbb{E}[T^{-\theta}_{\alpha}]$.
When $(P_{\ell})\sim \mathrm{PK}_{\alpha}(h\cdot f_{\alpha})$, $F(y)=\sum_{k=1}^{\infty}P_{k}\mathbb{I}_{\{U_{k}\leq y\}}$ is its corresponding bridge defined similarly as $F_{\alpha,0}$, and sampling $n$ variables from $F$, as in the $\mathrm{PD}(\alpha,0)$ case, leads to the general class of $\alpha\in(0,1)$ Gibbs partitions of $[n]$, with an EPPF , denoted by $\mathrm{PK}_{\alpha}(h\cdot f_{\alpha})-\mathrm{EPPF},$ as in ~\cite{Gnedin06,Pit03},
\begin{equation}
p^{[\nu]}_{\alpha}(n_{1},\ldots,n_{k})={\Psi}^{[\alpha]}_{n,k}\times p_{\alpha}(n_{1},\ldots,n_{k}),
\label{VEPPF}
\end{equation}
where, using the interpretation of the expressions derived by \cite{Gnedin06,Pit03} in~\cite{HJL2},
$$
{\Psi}^{[\alpha]}_{n,k} =\mathbb{E}[h(T_{\alpha})|K_{n}=k]=\mathbb{E}\bigg[h\big(Y^{(n-k\alpha)}_{\alpha,k\alpha}\big)\bigg].
$$ 
In the first expectation, $T_{\alpha}|K_{n}=k$ is evaluated for the $\mathrm{PD}(\alpha,0)$ case with $K_{n}\sim 
\mathbb{P}_{\alpha,0}^{(n)}(k).$ The second equality follows from the fact that such a conditional random variable equates in distribution 
to a variable 
$Y^{(n-k\alpha)}_{\alpha,k\alpha},$ with density $f^{(n-k\alpha)}_{\alpha,k\alpha}(t),$ such that~(pointwise), as in~\cite[eq. (2.13), p. 323]{HJL2},
\begin{equation}
Y^{(n-k\alpha)}_{\alpha,k \alpha}\overset{d}=\frac{T_{\alpha,k\alpha}}{B_{k\alpha,n-k\alpha}}=\frac{T_{\alpha,n}}{B^{\frac1\alpha}_{k,\frac{n}{\alpha}-k}},
\label{jamesidspecial}
\end{equation}
where variables in each ratio are independent, and throughout, $B_{a,b}$ denotes a $\mathrm{Beta}(a,b)$ random variable. It is noteworthy that~(\ref{jamesidspecial}) indicates that Mittag-Leffler variables play a role in the general $\alpha$ class of Gibbs partitions.

\subsection{Interpreting Gibbs partitions via infinite block fragmentation and coagulation operations}
While the general class of $\mathrm{PK}_{\alpha}(h\cdot f_{\alpha})$ 
distributions and the corresponding Gibbs partitions exhibit many desirable properties, most choices of $h(t)$ do not have any particular interpretation. The most notable exceptions are the important $\mathrm{PD}(\alpha,\theta)$ distributions which dominate the broad literature. There are some additional examples, such as the generalized gamma and Mittag-Leffler classes. Of particular interest to us are classes that correspond to nested families of mass partitions, whose marginal distributions follow some explicit collection of $\mathrm{PK}_{\alpha}$ distributions, and whose Markovian dynamics~(dependence structure) may be described by some operations on $\mathcal{P}_{\infty},$ and hence equivalently on spaces of integer partitions. The two most notable examples of such families are the nested families of the form $((P^{(j)}_{\ell}),j=0,1,2\ldots)$ represented by the Poisson-Dirichlet laws $(\mathrm{PD}(\alpha,\theta+j\alpha),j=0,1,\ldots),$ and the laws $(\mathrm{PD}(\alpha,\theta+j),j=0,1,\ldots).$ The first such collection is related to (dual) operations of size-biased deletion and insertion, as described in~\cite[Propositions 34 and 35]{PY97}, and the latter may be constructed by a single block $\mathrm{PD}(\alpha,1-\alpha)$ fragmentation operation, fragmenting successively the size-biased pick of the indicated families, leading to, in the initial case of $\mathrm{PD}(\alpha,\theta),$ an increase of $\theta$ to $\theta+1,$ and inversely by a dual coagulation operation as described in~\cite{DGM}. It is notable that the operations of size-biased deletion, as described in~\cite{PPY92,Pit03,Pit06}, and single-block fragmentation can be applied in principle to general families taking values on $\mathcal{P}_{\infty},$ whereas the latter (dual) operations involve the usage of independent beta distributed variables, which are particular to the $\mathrm{PD}(\alpha,\theta)$ distribution. Common descriptions, for extensions of size-biased deletion involving general
$\mathrm{PK}_{\alpha}(h\cdot f_{\alpha}),$ can be deduced from~\cite{PPY92, Pit03}, whereas fragmentation by $\mathrm{PD}(\alpha,1-\alpha)$ has been treated in~\cite{HJL2,JamesFragblock}. Results for specific examples of $h(t),$ requiring more detailed analysis, are also discussed in those and related works.

This work continues efforts to provide interpretations for larger classes of Gibbs partitions by embedding important operations within this framework. Here we address the formidable problem of extending the
dual, infinite block, coagulation/fragmentation results of Pitman~\cite{Pit99Coag}, that is, for $0<\beta<\alpha<1,$ 
a dual relationship between $V_{\beta,\theta}\sim \mathrm{PD}(\beta,\theta)$ and $V_{\alpha,\theta}\sim \mathrm{PD}(\alpha,\theta),$ with appropriate coagulation and fragmentation operations indicated, as in~\cite[Section 5.5]{Pit06}, by the following diagram
\begin{equation}
\PitmanDiagram{\mathrm{PD}(\alpha,\theta)}
{\mathrm{PD}(\beta/\alpha,\theta/\alpha)-\mathrm{COAG}}
{\mathrm{PD}(\beta,\theta)}
{\mathrm{PD}(\alpha,-\beta)-\mathrm{FRAG}}
{\hspace*{1in}}
\label{pitdual}
\end{equation}
This creates nested families of mass/integer partitions having laws $(\mathrm{PD}(\alpha,\theta),0<\alpha<1).$ Setting $\theta=0,$ $\alpha={\mbox e}^{-t}$ and $\beta={\mbox e}^{-(t+s)},$ the (homogeneous) continuous-time Markov operator $(\mathrm{PD}(e^{-s},0)-\mathrm{COAG}, s\ge 0)$ coincides with the semi-group of the Bolthausen-Sznitman coalescent~\cite{Bolt}, as discussed in~\cite{BerFrag,Pit99Coag,Pit06}. Briefly, following~\cite{BerFrag,Pit06}, we describe the $\mathrm{PD}(\alpha,-\beta)-\mathrm{FRAG}$ fragmentation operator, hereafter denoted as $\mathrm{FRAG}_{\alpha,-\beta},$ and corresponding $\mathrm{PD}(\beta/\alpha,\theta/\alpha)-\mathrm{COAG}$ coagulation operator. For $(P_{i})\in \mathcal{P}_{\infty},$ the $\mathrm{FRAG}_{\alpha,-\beta}$ operator is defined as, for $\mathrm{Rank}$ denoting the ranked re-arrangement of masses, 
\begin{equation}
\mathrm{FRAG}_{\alpha,-\beta}((P_{i})):=\mathrm{Rank}(P_{i}(\hat{Q}^{(i)}_{\ell}),i\ge 1)\in \mathcal{P}_{\infty},
\label{fragop}
\end{equation}
where, for each $i,$ $\hat{Q}^{(i)}:=(\hat{Q}^{(i)}_{\ell})\in\mathcal{P}_{\infty},$ the collection $(\hat{Q}^{(i)},i\ge 1)$ are iid $\mathrm{PD}(\alpha,-\beta)$ mass partitions, taken independent of the input $(P_{i}).$ There is also the property $\mathrm{FRAG}_{\alpha,-\sigma}\circ \mathrm{FRAG}_{\sigma,-\beta}=\mathrm{FRAG}_{\alpha,-\beta}$
for all $0<\beta<\sigma<\alpha<1.$ In terms of partitions $\{A_{1},\ldots,A_{k}\}$, one is shattering each block of the partition, say, $A_{j}$ with size $|A_{j}|,$ by an independent $\mathrm{PD}(\alpha,-\beta)$ partition of $|A_{j}|$ elements.
We now proceed to first describe the coagulation for more general independent laws on $\mathcal{P}_{\infty},$ by using the equivalence in terms of compositions of exchangeable bridges as in~\cite[Lemma 5.18]{Pit06} or \cite{BerFrag}. Suppose that $F_{V}$ and $G_{Q}$ are independent exchangeable bridges defined for $V$ and $Q$ in $\mathcal{P}_{\infty},$ with respective distributions $\mathbb{P}_{V}$ and $\mathbb{P}_{Q}.$ Then, for the input $\tilde{V},$  $V:=\mathbb{P}_{Q}-\mathrm{COAG}(\tilde{V})\in \mathcal{P}_{\infty}$ is equivalent to the ranked re-arrangement of masses formed by the composition $F_{V}(y)=F_{\tilde{V}}(G_{Q}(y)),$ for each $y\in[0,1],$ or simply $F_{V}=F_{\tilde{V}}\circ G_{Q},$ where $V$ has distribution $\mathbb{P}_{V}.$ Letting $F_{\alpha,\theta}$ denote a $\mathrm{PD}(\alpha,\theta)$ bridge, the $\mathrm{PD}(
{\beta}/{\alpha},
{\theta}/{\alpha})-\mathrm{COAG}$ result in~\cite{Pit99Coag}, as indicated from left to right in~(\ref{pitdual}), corresponds to $F_{\beta,\theta}=F_{\alpha,\theta}\circ G_{
{\beta}/{\alpha},
{\theta}/{\alpha}}.$ 
\begin{rem}
Note $F_{\beta}:=F_{\beta,0}$ and the same is true for other variables with $\theta=0.$
\end{rem}
In this work, for $0<\beta<\alpha<1$ and general  $V\sim \mathrm{PK}_{\beta}(h\cdot f_{\beta}),$ where $\mathbb{E}[h(T_{\beta})]=1,$ we shall apply the same independent $\mathrm{FRAG}_{\alpha,-\beta}$ operator leading to explicit identification of laws, and calculations for nested families of mass partitions and corresponding Gibbs partitions over $\alpha\in(0,1).$ This represents the primary focus of our work, however we also describe in detail how to construct a natural dual coagulation operation via dependent compositions of bridges and corresponding mass partitions, and demonstrate how easy it can be applied. We note that, in the literature, coagulation operations of this sort are generally defined for independent processes as in~\cite[Lemma 5.18]{Pit06}. While, in general, fragmentation and coagulation operations are clearly defined and hence straightforward to apply, the formidable challenge is to identify the resulting relevant distributions, and find ones with tractability. The duality result of \cite{Pit99Coag}, as described in~(\ref{pitdual}), is achieved by working with the EPPF's of the corresponding random partitions of $[n].$ In principle one may try such an approach to identify the various laws associated with $V\sim \mathrm{PK}_{\beta}(h\cdot f_{\alpha}),$ or attempt via the explicit constructions of exchangeable bridges. However, neither approach seems feasible. Here working on the space of mass partitions, we show, in Sections~\ref{sec:2frag} and~\ref{sec:3coag}, that the classes of (marginal) distributions $\tilde{V}\sim\mathrm{PK}_{\alpha}(\tilde{h}_{
{\beta}/{\alpha}}\cdot f_{\alpha})$ and $Q\sim\mathrm{PK}_{\beta/\alpha}(\hat{h}_{\alpha}\cdot f_{
{\beta}/{\alpha}}),$ with 
\begin{equation}
\tilde{h}_{\frac{\beta}{\alpha}}(v):=\mathbb{E}_{\frac{\beta}{\alpha}}\left[h\bigg(vT^{\frac{1}{\alpha}}_{\frac{\beta}{\alpha}}\bigg)\right]\qquad
{\mbox { and }}\qquad \hat{h}_{\alpha}(y):=\mathbb{E}_{\alpha}\left[h\big(T_{\alpha}y^{\frac{1}{\alpha}}\big)\right],
\end{equation}
may be interpreted as being equivalent in distribution to those arising from $\tilde{V}=\mathrm{FRAG}_{\alpha,-\beta}(V),$ and the marginal distribution of the corresponding coagulator which leads to $F_{V}=F_{\tilde{V}}\circ G_{Q},$ respectively. In Sections~\ref{sec:41GibbsPartFrag}-\ref{sec:43sampling}
, we obtain remarkable calculations for Gibbs partitions, and related identities, derived from the $\mathrm{FRAG}_{\alpha,-\beta}$ operation. Sections~\ref{sec:44PKhtilde} and~\ref{sec:45brownfrag}, provide interesting  identities, fixed point equations, and asymptotic results, which has interpretations in relation to $D-$ and $T-\mathrm{partition}$ intervals as discussed in~\cite{AldousT,Pit06}. 
Sections~\ref{ss:46Mittagsection} and~\ref{sec:5dualMittagsection} provide developments in relation to the Mittag-Leffler class~\cite{HJL2}. Section~\ref{sec:6CoagFragGGmodels} demonstrates how to use our dual coagulation and fragmentation operations to easily identify all the relevant laws, and constructs duality results for generalized gamma models, and size biased extensions. This presents another approach to recover the duality results in~\cite{Pit99Coag} for all $\theta>-\beta.$ 
 
For nested models primarily related to the fragmentation operator see~\cite{Basdevant,CurienHaas}. In addition,~\cite{Wood}, see also~\cite{Gasthaus}, applies the coag/frag duality on the space of partitions of $[n]$ to $\infty$-gram natural language models. This represents an application in Bayesian statistical machine learning involving the usage of inverse clustering (via $\mathrm{FRAG}_{\alpha,-\beta}$ fragmentation) and merging (via $\mathrm{PD}(\alpha,\theta)$ coagulation) on the space of partitions of $[n].$ Related to this,~\cite{JamesHnetwork} constructs (nested) hierarchical network/graph models using the coagulation fragmentation operations in~\cite{Pit99Coag} and also~\cite{DGM}.
For some other references on Gibbs-partitions and $\alpha$-stable Poisson-Kingman models, see~\cite{Bacallado,CaronFox,Deblasi,GriffithsSpano,HJL, HJL2,LomeliFavaro,PYaku}. See~\cite{Haas,Haas2} for other occurrences of the coag/frag operators in the $\mathrm{PD}(\alpha,\theta)$ setting.

\section{$\mathrm{PD}_{\alpha,-\beta}$ fragmentation of $\mathrm{PK}_{\beta}(h\cdot f_{\beta})$ mass partitions}\label{sec:2frag}
In order to achieve our results, we work with independent stable subordinators $\mathbf{T}_{\alpha}$ and $\mathbf{T}_{
{\beta}/{\alpha}},$ and representations of their relevant quantities under the independent $\mathrm{PD}(\alpha,0)$ and $\mathrm{PD}(
{\beta}/{\alpha},0)$ distributions. The corresponding independent local time processes are $(L_{\alpha}(t),t>0)$ and $(L_{
{\beta}/{\alpha}}(t),t\ge 0)$, satisfying~(\ref{scaling}) and~(\ref{inverselocaltime}), with local times at $1,$ denoted respectively as $L_{1,\alpha}\overset{d}=T^{-\alpha}_{\alpha}$ and $L_{1,
{\beta}/{\alpha}}\overset{d}=T^{-
{\beta}/{\alpha}}_{
{\beta}/{\alpha}}$, playing the role of $L_{1}$, as we have described, and otherwise following the more detailed description in~\cite{BerPit2000}, as it relates to the special case of the duality~\cite[Theorem 14 and Corollary 15]{Pit99Coag}.
In particular, $V_{\beta}\sim \mathrm{PD}(\beta,0)$ is formed 
by the independent coagulation,
$\mathrm{V}_{\beta}=\mathrm{PD}(\frac{\beta}{\alpha},0)-\mathrm{COAG}((V_{\alpha}))$, equivalent to, as in~\cite{BerFrag,BerLegall00,BerPit2000,Pit06},
\begin{equation}
\left(F_{\beta}(u)=F_{\alpha}\big(F_{\frac{\beta}{\alpha}}(u)\big), u\in[0,1]\right)\overset{d}=\left(\frac{T_{\alpha}\big(T_{\frac{\beta}{\alpha}}(u)\big)}{T_{\alpha}\big(T_{\frac{\beta}{\alpha}}(1)\big)},u\in[0,1]\right)
\label{subcoag} 
\end{equation}
and has local time at $1,$ $L_{1,\beta}=L_{
{\beta}/{\alpha}}(L_{1,\alpha})$ and inverse local time at $1,$ 
$T_{\beta}:=T_{\beta}(1)=T_{\alpha}(T_{
{\beta}/{\alpha}}(1)).$ Conversely, $V_{\alpha}=\mathrm{FRAG}_{\alpha,-\beta}(V_{\beta})\sim \mathrm{PD}(\alpha,0).$

\begin{rem}Note there is the well known distributional equivalence $T_{\beta}\overset{d}=T_{\alpha}\times T^{
{1}/{\alpha}}_{
{\beta}/{\alpha}}\overset{d}=T_{
{\beta}/{\alpha}}\times T^{
{\alpha}/{\beta}}_{\alpha}.$ However, in the case of interpretation of the $\mathrm{PD}(
{\beta}/{\alpha},0)$ coagulation, as in~(\ref{subcoag}), the order matters and thus we will only use $T_{\beta}\overset{d}=T_{\alpha}\times T^{
{1}/{\alpha}}_{
{\beta}/{\alpha}}$.
\end{rem}

Define
\begin{equation}
\omega^{(y)}_{\frac{\beta}{\alpha},\beta}(s)=\frac{\mathbb{P}
\left({\big[L_{\frac{\beta}{\alpha}}(L_{1,\alpha})\big]}^{-\frac{1}{\beta}}\in dy\left|L^{-\frac{1}{\alpha}}_{1,\alpha}=s\right.\right)}
{\mathbb{P}\big(L^{-\frac{1}{\beta}}_{1,\beta}\in dy\big)}
=\frac{\alpha y^{\alpha-1}f_{\frac{\beta}{\alpha}}\big({(y/s)}^{\alpha}\big)}{s^{\alpha}f_{\beta}(y)}
\label{ratioden}
\end{equation}
such that the conditional distribution of $L_{1,\alpha}|L_{1,\beta}$ may be expressed in terms of that of the transformed variable $L^{-
{1}/{\alpha}}_{1,\alpha}|L^{-
{1}/{\beta}}_{1,\beta}$  as,
\begin{equation}
\mathbb{P}
\left(L^{-\frac{1}{\alpha}}_{1,\alpha}\in ds\left|\big[L_{\frac{\beta}{\alpha}}(L_{1,\alpha})\big]^{-\frac{1}{\beta}}=y\right.\right)/ds=\omega^{(y)}_{\frac{\beta}{\alpha},\beta}(s)f_{\alpha}(s),
\label{mixdenfrag}
\end{equation}
which is equivalent to the conditional density of $T_{\alpha}$ given $T_{\alpha}\times T^{
{1}/{\alpha}}_{
{\beta}/{\alpha}}=y$.

\begin{thm}\label{fragtheorem}
Let $V\sim \mathrm{PK}_{\beta}(h\cdot f_{\beta})$ with local time at $1,$ say $L_{1,V},$ having density $h(\ell^{-
{1}/{\beta}})g_{\beta}(\ell).$ For any choice of $0<\beta<\alpha<1,$ let $\mathrm{FRAG}_{\alpha,-\beta}(\cdot)$ denote an $\mathrm{PD}(\alpha,-\beta)$ fragmentation operator independent of $V.$ Then,
\begin{enumerate}
\item[(i)] $\tilde{V}=\mathrm{FRAG}_{\alpha,-\beta}(V)\sim \mathrm{PK}_{\alpha}\bigg(\tilde{h}_{\frac{\beta}{\alpha}}\cdot f_{\alpha}\bigg)$ where
$$
\tilde{h}_{\frac{\beta}{\alpha}}(v):=\mathbb{E}_{\frac{\beta}{\alpha}}\left[h\bigg(vT^{\frac{1}{\alpha}}_{\frac{\beta}{\alpha}}\bigg)\right].
$$
That is, it has a local time at time $1,$ say, 
$\tilde{L}_{1,\tilde{V}}$, with density $\tilde{h}_{\frac{\beta}{\alpha}}(s^{-\frac{1}{\alpha}})g_{\alpha}(s)$.
\item[(ii)]
$\tilde{V}|L_{1,V}=y^{-\beta}$ has the $\alpha$-stable Poisson-Kingman distribution with, for each $y,$ and any $h(y),$ mixing density
$$
\mathbb{P}
\left(\tilde{L}^{-\frac{1}{\alpha}}_{1,\tilde{V}}\in ds\left|L^{-\frac{1}{\beta}}_{1,V}=y\right.\right)/ds
=\mathbb{P}
\left(L^{-\frac{1}{\alpha}}_{1,\alpha}\in ds\left|\big[L_{\frac{\beta}{\alpha}}(L_{1,\alpha})\big]^{-\frac{1}{\beta}}=y\right.\right)/ds,
$$
as in~(\ref{mixdenfrag}).
\item[(iii)]The conditional distribution of $\tilde{V}|L^{-
{1}/{\beta}}_{1,V}=y$ is equivalent to the distribution of $V_{\alpha}|L^{-
{1}/{\beta}}_{1,\beta}=y,$ which is 
\begin{equation}
\mathrm{PD}_{\alpha|\beta}(\alpha|y):=\int_{0}^{\infty}\mathrm{PD}(\alpha|s)\omega^{(y)}_{\frac{\beta}{\alpha},\beta}(s)f_{\alpha}(s)ds
=\mathrm{PK}_{\alpha}\left(\omega^{(y)}_{\frac{\beta}{\alpha},\beta}\cdot f_{\alpha}\right),
\label{condlocaltime}
\end{equation}
for $\omega^{(y)}_{\frac{\beta}{\alpha},\beta}(s)$ defined in~(\ref{ratioden}).
\end{enumerate}
\end{thm}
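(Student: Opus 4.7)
The plan is to prove (iii) first and then deduce (ii) and (i) from it. The key observation is that $V \sim \mathrm{PK}_\beta(h \cdot f_\beta)$ is by definition the mixture $\int_0^\infty \mathrm{PD}(\beta|y)\, h(y)\, f_\beta(y)\, dy$ (using the $y = t$ parametrization), so conditional on $L_{1,V}^{-1/\beta} = y$ one has $V \sim \mathrm{PD}(\beta|y)$, \emph{independent of $h$}. Since $\mathrm{FRAG}_{\alpha,-\beta}$ is taken independent of $V$, the conditional law of $\tilde V = \mathrm{FRAG}_{\alpha,-\beta}(V)$ given $L_{1,V}^{-1/\beta} = y$ depends only on the conditional law of $V$, not on $h$. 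This reduces the whole computation to the base case $h \equiv 1$, where $V = V_\beta \sim \mathrm{PD}(\beta,0)$ and the classical duality (\ref{subcoag}) applies directly.

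For (iii), invoking that base case: $V_\alpha = \mathrm{FRAG}_{\alpha,-\beta}(V_\beta) \sim \mathrm{PD}(\alpha,0)$, and the coagulation identity (\ref{subcoag}) gives the inverse local time decomposition $T_\beta \overset{d}{=} T_\alpha T_{\beta/\alpha}^{1/\alpha}$. Conditioning $V_\alpha$ on $L_{1,\beta}^{-1/\beta} = y$ is therefore the same as conditioning on $T_\alpha T_{\beta/\alpha}^{1/\alpha} = y$, and since $V_\alpha \sim \mathrm{PD}(\alpha \mid T_\alpha)$, the resulting law is the mixture of $\mathrm{PD}(\alpha|s)$ against the conditional density of $T_\alpha$ given $T_\alpha T_{\beta/\alpha}^{1/\alpha} = y$. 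A standard Bayes/Jacobian calculation using the independent pair $(T_\alpha, T_{\beta/\alpha})$ yields exactly the weight $\omega^{(y)}_{\beta/\alpha,\beta}(s)$ displayed in (\ref{ratioden}), producing the mixture representation (\ref{condlocaltime}). Statement (ii) is then merely the restatement of (iii) on the level of mixing densities: the conditional density of $\tilde L_{1,\tilde V}^{-1/\alpha}$ given $L_{1,V}^{-1/\beta} = y$ coincides with $\omega^{(y)}_{\beta/\alpha,\beta}(s) f_\alpha(s)$, which is (\ref{mixdenfrag}).

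For (i), I compute the marginal density of $\tilde L_{1,\tilde V}^{-1/\alpha}$ by integrating the conditional mixing density from (ii) against the marginal density $h(y) f_\beta(y)$ of $L_{1,V}^{-1/\beta}$:
\[
\int_0^\infty \omega^{(y)}_{\beta/\alpha,\beta}(s)\, f_\alpha(s)\, h(y)\, f_\beta(y)\, dy
= f_\alpha(s) \int_0^\infty h(y)\, \frac{\alpha y^{\alpha-1}}{s^\alpha}\, f_{\beta/\alpha}\!\left((y/s)^\alpha\right) dy.
\]
The substitution $u = (y/s)^\alpha$ converts the inner integral into $\int_0^\infty h(s u^{1/\alpha}) f_{\beta/\alpha}(u)\, du = \mathbb{E}_{\beta/\alpha}[h(s T_{\beta/\alpha}^{1/\alpha})] = \tilde h_{\beta/\alpha}(s)$, so the marginal density of $T_\alpha$ under $\tilde V$ is $\tilde h_{\beta/\alpha}(s) f_\alpha(s)$. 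The standard change of variables then yields the density $\tilde h_{\beta/\alpha}(s^{-1/\alpha}) g_\alpha(s)$ for $\tilde L_{1,\tilde V}$, proving (i).

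The main conceptual obstacle is part (iii): one must carefully leverage the independence of $\mathrm{FRAG}_{\alpha,-\beta}$ from $V$ to transport the classical base-case coagulation/fragmentation duality through an arbitrary mixture, which is precisely what forces the conditional law to be the universal kernel $\mathrm{PD}_{\alpha|\beta}(\alpha|y)$ for every admissible $h$. Once (iii) is in place, (ii) and (i) are essentially bookkeeping, with (i) requiring only the Jacobian substitution that identifies $\tilde h_{\beta/\alpha}$.
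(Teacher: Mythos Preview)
Your proof is correct and follows essentially the same approach as the paper's own argument: both reduce to the base case $h\equiv 1$ via the observation that the conditional law of $V$ given $L_{1,V}^{-1/\beta}=y$ is $\mathrm{PD}(\beta|y)$ irrespective of $h$ (the paper phrases this as a change of measure $\mathbb{E}[\Omega(\tilde V)]=\mathbb{E}[\Omega(V_\alpha)h(L_{1,\beta}^{-1/\beta})]$, you phrase it as conditioning), then invoke Pitman's duality $V_\alpha=\mathrm{FRAG}_{\alpha,-\beta}(V_\beta)$ and disintegrate against the joint density of $(T_\alpha,T_\beta)$ to extract the kernel $\omega^{(y)}_{\beta/\alpha,\beta}(s)$; your final Jacobian computation for $\tilde h_{\beta/\alpha}$ matches the paper's integral identity exactly, only with the order of presentation (proving (iii) first, then (ii), then (i)) reversed relative to the paper.
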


\begin{proof}
Let 
$\mathbb{\hat{E}}^{(\beta,0)}_{(\alpha,-\beta)}$ denote the expectation with respect to the joint law of $\big(V,(\hat{Q}^{(k)}, k\ge 1)\big)$
where $V\overset{d}=V_{\beta}\sim \mathrm{PD}(\beta,0)$ with local time at $1,$ $L_{1,\beta}$, with density $g_{\beta}(\ell)$, and independent of this, 
$\big(\hat{Q}^{(k)}, k\ge 1\big)$ are iid $\mathrm{PD}(\alpha,-\beta)$ mass partitions. Consider $V\sim \mathrm{PK}_{\alpha}(h\cdot f_{\beta}).$ The distribution of $\tilde{V}=\mathrm{FRAG}_{\alpha,-\beta}(V)$ is characterized, for a measurable function $\Omega,$ by
\begin{equation}
\mathbb{E}
\left[\Omega\big(\mathrm{FRAG}_{\alpha,-\beta}(V)\big)\right]
=\mathbb{\hat{E}}^{(\beta,0)}_{(\alpha,-\beta)}
\left[\Omega\big(\mathrm{FRAG}_{\alpha,-\beta}(V_{\beta})\big)h\big(L^{-\frac{1}{\beta}}_{1,\beta}\big)\right].
\label{character1}
\end{equation}
But, from \cite{BerPit2000,Pit99Coag}, as described in~(\ref{pitdual}), this is equivalent to, for $V_{\alpha}\sim \mathrm{PD}(\alpha,0),$
\begin{equation}
\mathbb{E}
\left[\Omega(V_{\alpha})h\bigg({\big[L_{\frac{\beta}{\alpha}}(L_{1,\alpha})\big]}^{-\frac{1}{\beta}}\bigg)\right].
\label{character2}
\end{equation}
It follows that $V_{\alpha}|L^{-
{1}/{\alpha}}_{1,\alpha}=s,{[L_{
{\beta}/{\alpha}}(L_{1,\alpha})]}^{-
{1}/{\beta}}=y$ has distribution $\mathrm{PD}(\alpha|s),$ not depending on $y.$ Using this, the scaling property $L_{
{\beta}/{\alpha}}(s^{-\alpha})\overset{d}=L_{1,
{\beta}/{\alpha}}\times s^{-\beta},$ and elementary arguments to describe the joint density, it follows that the expectation~(\ref{character2}) can be expressed as 
$$
\int_{0}^{\infty}\left[\int_{0}^{\infty}
\mathbb{E}[\Omega(V_{\alpha})|T_{\alpha}=s]\omega^{(y)}_{\frac{\beta}{\alpha},\beta}(s)f_{\alpha}(s)ds\right]h(y)f_{\beta}(y)
dy,
$$
which can also be expressed as,
$$
\int_{0}^{\infty}
\mathbb{E}[\Omega(V_{\alpha})|T_{\alpha}=s]\tilde{h}_{\frac{\beta}{\alpha}}(s)f_{\alpha}(s)ds
$$
for $\tilde{h}_{\frac{\beta}{\alpha}}(s):=\mathbb{E}_{\frac{\beta}{\alpha}}\left[h\bigg(sT^{\frac{1}{\alpha}}_{\frac{\beta}{\alpha}}\bigg)\right]
=\int_{0}^{\infty}\omega^{(y)}_{\frac{\beta}{\alpha},\beta}(s)h(y)f_{\beta}(y)dy,$ yielding the results. 
\end{proof}

Recall that for any $0<\beta<\sigma<\alpha<1,$ $\mathrm{FRAG}_{\alpha,-\beta}=\mathrm{FRAG}_{\alpha,-\sigma}\circ \mathrm{FRAG}_{\sigma,-\beta}.$

\begin{cor}
Suppose that $0<\beta<\sigma<\alpha<1,$  $V\sim \mathrm{PK}_{\beta}(h\cdot f_{\beta})$ with $\mathbb{E}[h(T_{\beta})]=1.$ Then, $V'=\mathrm{FRAG}_{\sigma,-\beta}(V)\sim 
\mathrm{PK}_{\sigma}\bigg(\tilde{h}_{\frac{\beta}{\sigma}}\cdot f_{\sigma}\bigg)$ where
$
\tilde{h}_{\frac{\beta}{\sigma}}(s):=\mathbb{E}_{\frac{\beta}{\sigma}}\left[h\bigg(sT^{\frac{1}{\sigma}}_{\frac{\beta}{\sigma}}\bigg)\right].
$ 
Hence, $\tilde{V}=\mathrm{FRAG}_{\alpha,-\sigma}(V')\sim \mathrm{PK}_{\alpha}\bigg(\tilde{h}_{\frac{\beta}{\alpha}}\cdot f_{\alpha}\bigg)$ where
$$
\tilde{h}_{\frac{\beta}{\alpha}}(v):=
\mathbb{E}_{\frac{\sigma}{\alpha}}\left[\tilde{h}_{\frac{\beta}{\sigma}}\bigg(vT^{\frac{1}{\alpha}}_{\frac{\sigma}{\alpha}}\bigg)\right]
=\mathbb{E}_{\frac{\beta}{\alpha}}\left[h\bigg(vT^{\frac{1}{\alpha}}_{\frac{\beta}{\alpha}}\bigg)\right].
$$
\end{cor}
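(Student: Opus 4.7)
The plan is to prove this corollary by a direct two-stage application of Theorem~\ref{fragtheorem}(i), combined with the composition property of the fragmentation operators and the well-known scaling identity for stable subordinators noted in the Remark preceding Theorem~\ref{fragtheorem}.

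First I would apply Theorem~\ref{fragtheorem}(i) with the roles of $(\beta,\alpha)$ played by $(\beta,\sigma)$ to the input $V\sim \mathrm{PK}_{\beta}(h\cdot f_{\beta})$. Since $0<\beta<\sigma<1$ and $\mathbb{E}[h(T_{\beta})]=1$, the theorem immediately gives $V'=\mathrm{FRAG}_{\sigma,-\beta}(V)\sim \mathrm{PK}_{\sigma}(\tilde h_{\beta/\sigma}\cdot f_{\sigma})$ with the claimed $\tilde h_{\beta/\sigma}$. One has to verify that the new mixing function integrates to one against $f_\sigma$ so that Theorem~\ref{fragtheorem} can be re-applied, but this is automatic because $V'$ is a mass partition and its local time at $1$ must have a bona fide density. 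Applying Theorem~\ref{fragtheorem}(i) a second time with parameters $(\sigma,\alpha)$ and input $V'$, using the function $\tilde h_{\beta/\sigma}$ in place of $h$, yields $\tilde V=\mathrm{FRAG}_{\alpha,-\sigma}(V')\sim \mathrm{PK}_{\alpha}(H\cdot f_{\alpha})$ where
\[
H(v)=\mathbb{E}_{\sigma/\alpha}\!\left[\tilde h_{\beta/\sigma}\!\left(vT^{1/\alpha}_{\sigma/\alpha}\right)\right].
\]

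To identify $H$ with $\tilde h_{\beta/\alpha}$, I would use the semigroup property $\mathrm{FRAG}_{\alpha,-\sigma}\circ \mathrm{FRAG}_{\sigma,-\beta}=\mathrm{FRAG}_{\alpha,-\beta}$ recalled just before the corollary. Consequently $\tilde V=\mathrm{FRAG}_{\alpha,-\beta}(V)$, and Theorem~\ref{fragtheorem}(i) applied directly with parameters $(\beta,\alpha)$ forces $\tilde V\sim \mathrm{PK}_{\alpha}(\tilde h_{\beta/\alpha}\cdot f_{\alpha})$. Since a $\mathrm{PK}_{\alpha}$ law uniquely determines its mixing density, we conclude $H\equiv \tilde h_{\beta/\alpha}$, which is precisely the claimed chain of equalities.

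For readers who prefer a direct verification without invoking uniqueness, I would give a one-line probabilistic check. Unpacking the definition of $\tilde h_{\beta/\sigma}$ inside $H$ and using independence of $T_{\sigma/\alpha}$ and $T_{\beta/\sigma}$, one has
\[
H(v)=\mathbb{E}\!\left[h\!\left(v\,T^{1/\alpha}_{\sigma/\alpha}T^{1/\sigma}_{\beta/\sigma}\right)\right].
\]
The scaling identity from the Remark preceding Theorem~\ref{fragtheorem}, applied with the indices $\sigma/\alpha$ and $\beta/\alpha$ in place of $\alpha$ and $\beta$, gives $T_{\beta/\alpha}\stackrel{d}{=}T_{\sigma/\alpha}\times T^{\alpha/\sigma}_{\beta/\sigma}$, hence $T^{1/\alpha}_{\beta/\alpha}\stackrel{d}{=}T^{1/\alpha}_{\sigma/\alpha}\times T^{1/\sigma}_{\beta/\sigma}$, and therefore $H(v)=\mathbb{E}_{\beta/\alpha}[h(vT^{1/\alpha}_{\beta/\alpha})]=\tilde h_{\beta/\alpha}(v)$.

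There is no serious obstacle here; the only mild subtlety is the bookkeeping of iterated expectations and the rescaling of the subordinator indices when converting between $(\sigma,\alpha)$-scaling and $(\beta,\alpha)$-scaling. Keeping the exponents straight and invoking the Remark's distributional identity at the correct level of indices is the one place where care is needed.
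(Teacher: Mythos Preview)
Your proposal is correct and matches the paper's intended argument: the corollary is stated immediately after Theorem~\ref{fragtheorem} with no separate proof, being a direct consequence of two applications of Theorem~\ref{fragtheorem}(i) together with the composition property $\mathrm{FRAG}_{\alpha,-\sigma}\circ\mathrm{FRAG}_{\sigma,-\beta}=\mathrm{FRAG}_{\alpha,-\beta}$ recalled just before it. Your supplementary direct verification via the subordinator identity $T_{\beta/\alpha}\overset{d}{=}T_{\sigma/\alpha}\times T^{\alpha/\sigma}_{\beta/\sigma}$ is a welcome addition that makes the equality of the two expressions for $\tilde h_{\beta/\alpha}$ explicit without appealing to uniqueness of the mixing density.
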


\section{Duality via dependent coagulation}\label{sec:3coag}
We now describe how to construct dependent coagulations to complete the dual process of recovering $V\sim \mathrm{PK}_{\beta}(h\cdot f_{\beta})$ from the coagulation of $\tilde{V}=\mathrm{FRAG}_{\alpha,-\beta}(V)\sim \mathrm{PK}_{\alpha}(\tilde{h}_{\frac{\beta}{\alpha}}\cdot f_{\alpha}).$
Our results show how specification of $h(t)$ leads to a prescription to identify the laws of $V,\tilde{V},Q$ without guess-work. 

Recall that for the independent mass partitions $(V_{\alpha},Q_{\frac{\beta}{\alpha}})$  described in~(\ref{subcoag}), the process of coagulation yields an inverse local time at $1$ for $V_{\beta}$ to be $T_{\beta}(1)=T_{\alpha}(T_{
{\beta}/{\alpha}}(1))\overset{d}=T_{\alpha}\times T^{
{1}/{\alpha}}_{
{\beta}/{\alpha}}.$ For $\tilde{V}$ as described above, we consider the dependent pair $(\tilde{V},Q)$ with joint law, say, $\mathrm{P}^{\frac{\beta}{\alpha}}_{\alpha}(h),$ characterized by 
\begin{equation}
\mathbb{E}[\Omega(\tilde{V},Q)]=\mathbb{E}^{(\frac{\beta}{\alpha},0)}_{(\alpha,0)}\left[\Omega\big(V_{\alpha},Q_{\frac{\beta}{\alpha}}\big)h\bigg(T_{\alpha}\big(T_{\frac{\beta}{\alpha}}(1)\big)\bigg)\right],
\label{jointmeasure1}
\end{equation}
with $\mathbb{E}\big[h(T_{\beta}(1))\big]=\mathbb{E}\left[h\bigg(T_{\alpha}\big(T_{\frac{\beta}{\alpha}}(1)\big)\bigg)\right]=1$, and the notation
$\mathbb{E}^{(\frac{\beta}{\alpha},0)}_{(\alpha,0)}$ referring to an expectation evaluated under the joint law of the independent $\mathrm{PD}(\alpha,0)$ and $\mathrm{PD}(\beta/\alpha,0)$ distributions. We use this for clarity, but will suppress it when it is clear we are referring to such variables. Equivalently, by conditioning and scaling properties, the joint law of $(\tilde{V},Q)
$ is given by 
\begin{equation}
\mathrm{P}^{\frac{\beta}{\alpha}}_{\alpha}(h):=\int_{0}^{\infty}\int_{0}^{\infty}\mathrm{PD}(\alpha|s)\mathrm{PD}({\beta}/
{\alpha}|y)h\big(sy^{\frac{1}{\alpha}}\big)f_{\frac{\beta}{\alpha}}(y)f_{\alpha}(s)dyds.
\label{jointmeasure2}
\end{equation}
\begin{rem}
For further clarity, we may use the notation $\mathrm{P}^{\frac{\beta}{\alpha}}_{\alpha}(h)=\mathrm{P}^{\frac{\beta}{\alpha}}_{\alpha}\left(h\cdot(f_{\alpha}
,f_{\frac{\beta}{\alpha}})\right)$
\end{rem}

In addition, for collections of iid $\mathrm{Uniform}[0,1]$ variables $((\tilde{U}_{k}),(U_{\ell}))$ independent of $(\tilde{V},Q)$ define random distribution functions (exchangeable bridges), for $y\in[0,1]$,
\begin{equation}
F_{\tilde{V}}(y)=\sum_{k=1}^{\infty}\tilde{V}_{k}\mathbb{I}_{\left\{\tilde{U}_{k}\leq y\right\}}\qquad  {\mbox { and }} \qquad G_{Q}(y)=\sum_{\ell=1}^{\infty}Q_{\ell}\mathbb{I}_{\left\{U_{\ell}\leq y\right\}}.
\label{bridges}
\end{equation}

\begin{rem}It follows that when $h(t)=t^{-\theta}/\mathbb{E}\big[T^{-\theta}_{\beta}\big]$ for $\theta>-\beta,$ $\tilde{V}\sim \mathrm{PD}(\alpha,\theta)$ is independent of $Q\sim \mathrm{PD}(
{\beta}/{\alpha},
{\theta}/{\alpha}).$  
Hence, $F_{\tilde{V}}\overset{d}=F_{\alpha,\theta}$ and $G_{Q}\overset{d}=G_{
{\beta}/{\alpha},
{\theta}/{\alpha}}$.
\end{rem}

\begin{prop}\label{propcoag}For $0<\beta<\alpha<1,$ let $(\tilde{V},Q)$ have a joint distribution, $\mathrm{P}^{\frac{\beta}{\alpha}}_{\alpha}(h),$ specified by~(\ref{jointmeasure1}) or equivalently~(\ref{jointmeasure2}), such that $\tilde{V}\sim \mathrm{PK}_{\alpha}\bigg(\tilde{h}_{\frac{\beta}{\alpha}}\cdot f_{\alpha}\bigg),$ and 
$(F_{\tilde{V}}, G_{Q})$ are bridges defined in~(\ref{bridges}). Let $V\in \mathcal{P}_{\infty}$ be the ranked masses of the bridge defined by the composition $F_{V}:=F_{\tilde{V}}\circ G_{Q}.$ 
Then, $V$ is equivalent to the coagulation of $\tilde{V}$ by $Q$ and there are the following properties. 
\begin{enumerate}
\item[(i)]$V\sim \mathrm{PK}_{\beta}(h\cdot f_{\beta}).$
\item[(ii)]The marginal distribution of $Q~\sim \mathrm{PK}_{\beta/\alpha}(\hat{h}_{\alpha}\cdot f_{\frac{\beta}{\alpha}}),$ where
\begin{equation}
\hat{h}_{\alpha}(y)=\mathbb{E}_{\alpha}[h(T_{\alpha}y^{\frac{1}{\alpha}})].
\label{h-coagulator}
\end{equation}
and the corresponding inverse local time $\hat{T}_{1}$ has density $\hat{h}_{{\alpha}}(y)f_{\frac{\beta}{\alpha}}(y)$.
\item[(iii)]The distribution of $\tilde{V}|\hat{T}_{1}=y$ is $\mathrm{PK}_{\alpha}(h^{(y)}_{\alpha}\cdot f_{\alpha}),$
where
$$
h^{(y)}_{\alpha}(s)=\frac{h(sy^{\frac{1}{\alpha}})}
{\mathbb{E}_{\alpha}[h(T_{\alpha}y^{\frac{1}{\alpha}})]}.
$$
\end{enumerate}
\end{prop}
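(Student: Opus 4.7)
The plan is to run all three parts by starting from the characterization (\ref{jointmeasure1}) and exploiting the known base case (independent $V_\alpha \sim \mathrm{PD}(\alpha,0)$ and $Q_{\beta/\alpha} \sim \mathrm{PD}(\beta/\alpha,0)$, for which the composition of bridges produces $V_\beta \sim \mathrm{PD}(\beta,0)$ with inverse local time $T_\beta(1) = T_\alpha(T_{\beta/\alpha}(1)) \stackrel{d}{=} T_\alpha \cdot T_{\beta/\alpha}^{1/\alpha}$ as in (\ref{subcoag}) and the subsequent remark). The strategy is to reweight this base joint law by $h(T_\alpha \cdot T_{\beta/\alpha}^{1/\alpha})$ and then read off marginals and conditionals by integration.

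For (i), I would take a measurable $\Omega$ depending only on the coagulated object $V = \mathrm{Rank}(F_{\tilde V}\circ G_Q)$ and apply (\ref{jointmeasure1}): under $\mathrm{P}^{\beta/\alpha}_{\alpha}(h)$ this becomes $\mathbb{E}^{(\beta/\alpha,0)}_{(\alpha,0)}[\Omega(V_\beta)\, h(T_\beta(1))]$, because composition of the independent bridges $F_\alpha \circ G_{\beta/\alpha}$ gives precisely a $\mathrm{PD}(\beta,0)$ bridge with inverse local time $T_\beta(1)$. Conditioning on $T_\beta(1)=t$ so that $V_\beta \mid T_\beta(1)=t \sim \mathrm{PD}(\beta\mid t)$ yields
\[
\mathbb{E}[\Omega(V)] = \int_0^\infty \mathbb{E}[\Omega(V_\beta)\mid T_\beta(1)=t]\, h(t) f_\beta(t)\,dt,
\]
which is exactly the mixture defining $\mathrm{PK}_\beta(h\cdot f_\beta)$.

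For (ii), I would restrict $\Omega$ to depend only on $Q$ and use the explicit product form (\ref{jointmeasure2}). The $s$-integral factors off as
\[
\int_0^\infty h(s y^{1/\alpha}) f_\alpha(s)\,ds = \mathbb{E}_\alpha[h(T_\alpha y^{1/\alpha})] = \hat{h}_\alpha(y),
\]
leaving $\int \mathbb{E}[\Omega(Q_{\beta/\alpha})\mid T_{\beta/\alpha}=y]\,\hat{h}_\alpha(y) f_{\beta/\alpha}(y)\,dy$, whence $Q \sim \mathrm{PK}_{\beta/\alpha}(\hat{h}_\alpha\cdot f_{\beta/\alpha})$. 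Taking $\Omega$ to be an indicator in $y$ identifies the density of the inverse local time $\hat{T}_1$ as $\hat{h}_\alpha(y) f_{\beta/\alpha}(y)$, consistent with the general prescription for $\mathrm{PK}$ models.

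For (iii), I would compute $\mathbb{E}[\Omega(\tilde V)\mid \hat{T}_1 = y]$ by dividing the joint density of $(\tilde V, \hat T_1)$ by the marginal of $\hat T_1$. From (\ref{jointmeasure2}), holding $Q$'s inverse local time at $y$ gives
\[
\mathbb{E}[\Omega(\tilde V)\mid \hat{T}_1=y] = \frac{1}{\hat{h}_\alpha(y)}\int_0^\infty \mathbb{E}[\Omega(V_\alpha)\mid T_\alpha=s]\, h(s y^{1/\alpha})\, f_\alpha(s)\,ds,
\]
which is the $\mathrm{PK}_\alpha$ mixture with mixing density $h^{(y)}_\alpha(s) f_\alpha(s)$, as claimed. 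The only genuinely delicate step is the first one: I must be sure that the ranked masses of $F_{\tilde V}\circ G_Q$ under the reweighted law $\mathrm{P}^{\beta/\alpha}_\alpha(h)$ still coincide in distribution with the ranked masses of the corresponding composition under the base independent law (so that applying $\Omega$ to $V$ matches applying $\Omega$ to $V_\beta$). This is essentially a tower/change-of-measure argument using that the bridge composition depends measurably and equivariantly on the pair of mass partitions, so that the reweighting by $h(T_\beta(1))$ commutes with the $\mathrm{Rank}$ operation — the rest of the proof is then a clean unwinding of densities.
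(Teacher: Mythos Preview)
Your proposal is correct and follows essentially the same route as the paper: for (i) you invoke the characterization (\ref{jointmeasure1}) together with the base-case identity $F_\beta=F_\alpha\circ G_{\beta/\alpha}$ from~(\ref{subcoag}), and for (ii)--(iii) you integrate out or condition in (\ref{jointmeasure2}), which is exactly what the paper sketches as ``straightforward usage of~(\ref{jointmeasure2}).'' Your final worry is unnecessary: since $\mathrm{P}^{\beta/\alpha}_\alpha(h)$ is \emph{defined} as the reweighting of the independent $\mathrm{PD}(\alpha,0)\times\mathrm{PD}(\beta/\alpha,0)$ law by $h(T_\alpha(T_{\beta/\alpha}(1)))$, any measurable functional of $(\tilde V,Q)$ (and independent uniforms, which are unaffected by the change of measure) automatically has the claimed expectation, so no separate ``commutes with $\mathrm{Rank}$'' argument is needed.
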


\begin{proof}We first recall from~(\ref{subcoag}) that under independent $\mathrm{PD}(\alpha,0)$ and $\mathrm{PD}(
{\beta}/{\alpha},0)$ laws, the bridge $F_{\beta}:=F_{\alpha}\circ G_{\frac{\beta}{\alpha}}$ follows the law of a $\mathrm{PD}(\beta,0)$ bridge with inverse local time at $1,$ $T_{\beta}:=T_{\beta}(1)=T_{\alpha}(T_{
{\beta}/{\alpha}}(1)).$
Hence, under the joint law of $(\tilde{V},Q)$ specified by~(\ref{jointmeasure1}), it follows that, for $F_{V}:=F_{\tilde{V}}\circ G_{Q},$
$$
\mathbb{E}\big[\Omega(F_{\tilde{V}}\circ G_{Q})\big]
=\mathbb{E}\left[ \Omega\big(F_{\alpha}\circ G_{\frac{\beta}{\alpha}}\big)h\big(T_{\alpha}\big(T_{\frac{\beta}{\alpha}}(1)\big)\big)\right]
$$
showing that $F_{V}$ is a $\mathrm{PK}_{\beta}(h\cdot f_{\beta})$ bridge and thus $V\sim \mathrm{PK}_{\beta}(h\cdot f_{\beta})$ in statement~(i). Statements~(ii) and~(iii) follow from straightforward usage of (\ref{jointmeasure2}).
\end{proof}

The next result shows that $(\tilde{V},Q)|T_{V}=r$ is equivalent to $(V_{\alpha},Q_{\frac{\beta}{\alpha}})|T_{\alpha}(T_{\frac{\beta}{\alpha}}(1))=r.$

\begin{prop}\label{propcoag2}For $0<\beta<\alpha<1,$ let $(\tilde{V},Q)$ have a joint distribution, $\mathrm{P}^{\frac{\beta}{\alpha}}_{\alpha}(h),$ specified by~(\ref{jointmeasure1}) or equivalently~(\ref{jointmeasure2}), such that $\tilde{V}\sim \mathrm{PK}_{\alpha}(\tilde{h}_{\frac{\beta}{\alpha}}\cdot f_{\alpha}),$ and 
$(F_{\tilde{V}}, G_{Q})$ are bridges defined in~(\ref{bridges}). Let $V\in \mathcal{P}_{\infty}$ be the ranked masses of the bridge defined by the composition $F_{V}:=F_{\tilde{V}}\circ G_{Q}$, 
with inverse local time at $1$ denoted as $T_{V}$ with density 
$h(t)f_{\beta}(t).$ Then, by a change of variable, the joint distribution $\mathrm{P}^{\frac{\beta}{\alpha}}_{\alpha}(h)$ can be expressed as
\begin{equation}
\int_{0}^{\infty}\left[\int_{0}^{\infty}\mathrm{PD}(\alpha|ry^{-1/\alpha})\mathrm{PD}({\beta}/
{\alpha}|y)
\frac{f_{\alpha}(ry^{-\frac{1}{\alpha}})}{y^{\frac{1}{\alpha}}f_{\beta}(r)}f_{\frac{\beta}{\alpha}}(y)dy\right]h(r)f_{\beta}(r)dr.
\label{jointmeasure3}
\end{equation}
That is, the joint distribution of $(\tilde{V},Q)|T_{V}=r$ is equivalent to $(V_{\alpha},Q_{\frac{\beta}{\alpha}})|T_{\alpha}(T_{\frac{\beta}{\alpha}}(1))=r$, for all $h$, and it is given by
$$
\int_{0}^{\infty}\mathrm{PD}(\alpha|ry^{-1/\alpha})\mathrm{PD}\left(\left.
{\beta}/
{\alpha}\right|y\right)
\frac{f_{\alpha}(ry^{-\frac{1}{\alpha}})}{y^{\frac{1}{\alpha}}f_{\beta}(r)}f_{\frac{\beta}{\alpha}}(y)dy.
$$
\end{prop}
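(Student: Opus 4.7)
The plan is to derive the representation (\ref{jointmeasure3}) directly from the defining formula (\ref{jointmeasure2}) by a single change of variable that isolates $r := s\,y^{1/\alpha}$—the argument appearing inside $h$—as one of the new integration variables, and then to read off the conditional-law interpretation by combining this with the subordination identity $T_\beta \overset{d}{=} T_\alpha\,T_{\beta/\alpha}^{1/\alpha}$ noted in the Remark after (\ref{subcoag}). No new distributional input is required beyond Proposition \ref{propcoag} and that subordination identity.

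Concretely, in (\ref{jointmeasure2}) I would fix $y$ in the inner integral and substitute $s = r y^{-1/\alpha}$, with Jacobian $ds = y^{-1/\alpha}\,dr$. This turns $h(s y^{1/\alpha})\,f_\alpha(s)\,ds$ into $h(r)\,f_\alpha(r y^{-1/\alpha})\,y^{-1/\alpha}\,dr$ while replacing $\mathrm{PD}(\alpha|s)$ by $\mathrm{PD}(\alpha|r y^{-1/\alpha})$. Multiplying and dividing the integrand by $f_\beta(r)$, then switching the order of integration so that $r$ is the outer variable and $y$ the inner one, produces (\ref{jointmeasure3}) in a purely formal manner.

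The substantive content is in recognizing the bracket of (\ref{jointmeasure3}) as a probability kernel. By the same Jacobian calculation applied to the independent pair $(T_\alpha, T_{\beta/\alpha})$, the subordination identity forces the marginal density identity
$$
f_\beta(r) = \int_0^\infty f_\alpha(r y^{-1/\alpha})\,y^{-1/\alpha}\,f_{\beta/\alpha}(y)\,dy,
$$
which guarantees that $f_\alpha(r y^{-1/\alpha})\,f_{\beta/\alpha}(y)/(y^{1/\alpha} f_\beta(r))$ is a bona fide conditional density of $T_{\beta/\alpha}(1)=y$ given $T_\alpha(T_{\beta/\alpha}(1)) = r$. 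Averaging the product of Poisson--Kingman kernels $\mathrm{PD}(\alpha|r y^{-1/\alpha})\,\mathrm{PD}(\beta/\alpha|y)$ against this conditional density is then, by the same iterated-conditioning argument used in the proof of Theorem \ref{fragtheorem} (where $\mathrm{PD}(\alpha|s)$ was seen to depend only on $s$ and not on the coagulating variable), precisely the conditional joint law of $(V_\alpha, Q_{\beta/\alpha})$ under the product $\mathrm{PD}(\alpha,0)\otimes \mathrm{PD}(\beta/\alpha,0)$ measure given $T_\alpha(T_{\beta/\alpha}(1)) = r$.

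To conclude the equality with the conditional law of $(\tilde V, Q)$ given $T_V=r$, I would invoke Proposition \ref{propcoag}(i), which identifies $h(r)\,f_\beta(r)$ as the marginal density of $T_V$ under $\mathrm{P}^{\beta/\alpha}_\alpha(h)$. Since (\ref{jointmeasure3}) exhibits $\mathrm{P}^{\beta/\alpha}_\alpha(h)$ as an integral of the bracket against $h(r)\,f_\beta(r)\,dr$, uniqueness of regular conditional distributions forces the bracket to be the conditional law of $(\tilde V, Q)$ given $T_V = r$, proving its coincidence with the conditional law computed above under the independent measure. I do not expect any essential obstacle: the argument is essentially a Jacobian computation paired with the subordination density identity, and the main care required is purely notational, keeping separate the two layers of conditioning (the mass partitions given their inverse local times, and the inverse local times given their product).
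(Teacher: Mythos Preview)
Your proposal is correct and follows precisely the approach the paper itself indicates: the proposition is stated with its proof method embedded in the statement (``by a change of variable''), and no separate proof section is given. Your explicit Jacobian computation $s=ry^{-1/\alpha}$, the multiply-and-divide by $f_\beta(r)$, and the recognition of the bracket as the conditional density via the subordination identity and Proposition~\ref{propcoag}(i) are exactly what is implicit in the paper's one-line justification.
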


The next Corollary provides an answer to when or under which situation the pair $(Q,V)$ specified by the coagulation $F_{{V}}=F_{\tilde{V}}\circ G_{Q}$ are in the same family of distributions, specifically, distributions of the form $(\mathrm{PK}_{\frac{\beta}{\delta}}(\hat{h}_{\delta}\cdot f_{\frac{\beta}{\delta}}), 0<\beta<\delta<1),$
where $\hat{h}_{\delta}(y)=\mathbb{E}_{\delta}[h(T_{\delta}y^{\frac{1}{\delta}})]$ as in (\ref{h-coagulator}).

\begin{cor}\label{CoagFamily}
For $0<\beta<\sigma<\alpha<1,$ consider the settings in~Proposition~\ref{propcoag} where now $(\tilde{V},Q)\sim \mathrm{P}^{\frac{\beta}{\sigma}}_{\frac{\sigma}{\alpha}}(\hat{h}_{\alpha}),$ and $V\in\mathcal{P}_{\infty}$ is obtained by the coagulation equivalent to ranked masses of $F_{{V}}=F_{\tilde{V}}\circ G_{Q}.$ Then, the variables $(V,\tilde{V},Q)$ have the following marginal (or conditional) distributions
\begin{enumerate}
\item[(i)]$V\sim \mathrm{PK}_{\frac{\beta}{\alpha}}\bigg(\hat{h}_{\alpha}\cdot f_{\frac{\beta}{\alpha}}\bigg)$, for $\hat{h}_{\alpha}(v)=\mathbb{E}_{\alpha}\bigg[h(T_{\alpha}v^{\frac{1}{\alpha}})\bigg]$.
\item[(ii)]$Q~\sim\mathrm{PK}_{\frac{\beta}{\sigma}}\bigg(\hat{h}_{\sigma}\cdot f_{\frac{\beta}{\sigma}}\bigg),$ where $\hat{T}_{1}$ has density $\hat{h}_{\sigma}\cdot f_{\frac{\beta}{\sigma}},$ for
\begin{equation}
\hat{h}_{\sigma}(y)=\mathbb{E}_{\frac{\sigma}{\alpha}}\bigg[\hat{h}_{\alpha}(T_{\frac{\sigma}{\alpha}}y^{\frac{\alpha}{\sigma}})\bigg]=\mathbb{E}_{\sigma}[h(T_{\sigma}y^{\frac{1}{\sigma}})].
\label{h-coagulator2}
\end{equation}
\item[(iii)]The distribution of $\tilde{V}|\hat{T}_{1}=y$ is $\mathrm{PK}_{\frac{\sigma}{\alpha}}\bigg(\big(\hat{h}_{\alpha}\big)^{(y)}_{\frac{\sigma}{\alpha}}\cdot f_{\frac{\sigma}{\alpha}}\bigg),$
where
$$
\big(\hat{h}_{\alpha}\big)^{(y)}_{\frac{\sigma}{\alpha}}(s)
=\mathbb{E}_{\alpha}\bigg[h\big(T_{\alpha}s^{\frac{1}{\alpha}}y^{\frac{1}{\sigma}}\big)\bigg]/\mathbb{E}_{\sigma}[h(T_{\sigma}y^{\frac{1}{\sigma}})].
$$
\end{enumerate}
\end{cor}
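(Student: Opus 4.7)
The plan is to deduce this corollary as a direct parameter-substituted instance of Proposition~\ref{propcoag}, using the stable factorization $T_{\sigma}\overset{d}=T_{\alpha}\times T_{\sigma/\alpha}^{1/\alpha}$ (independent factors) from the Remark following~(\ref{subcoag}) to collapse the iterated expectations into closed form.

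First I would set $\alpha'=\sigma/\alpha$, $\beta'=\beta/\alpha$, and $h'=\hat{h}_{\alpha}$; the constraint $0<\beta'<\alpha'<1$ follows from $0<\beta<\sigma<\alpha<1$, so Proposition~\ref{propcoag} applies verbatim to the pair $(\tilde{V},Q)\sim \mathrm{P}^{\beta'/\alpha'}_{\alpha'}(h')$, which is precisely $\mathrm{P}^{\beta/\sigma}_{\sigma/\alpha}(\hat{h}_{\alpha})$ as postulated. Part~(i) of that proposition reads off $V\sim \mathrm{PK}_{\beta/\alpha}(\hat{h}_{\alpha}\cdot f_{\beta/\alpha})$, proving~(i); part~(ii) gives $Q\sim \mathrm{PK}_{\beta/\sigma}((\hat{h}_{\alpha})^{\wedge}_{\sigma/\alpha}\cdot f_{\beta/\sigma})$ with $(\hat{h}_{\alpha})^{\wedge}_{\sigma/\alpha}(y)=\mathbb{E}_{\sigma/\alpha}[\hat{h}_{\alpha}(T_{\sigma/\alpha}y^{\alpha/\sigma})]$; and part~(iii) gives $\tilde{V}\mid\hat{T}_{1}=y\sim \mathrm{PK}_{\sigma/\alpha}((\hat{h}_{\alpha})^{(y)}_{\sigma/\alpha}\cdot f_{\sigma/\alpha})$ with $(\hat{h}_{\alpha})^{(y)}_{\sigma/\alpha}(s)=\hat{h}_{\alpha}(sy^{\alpha/\sigma})/(\hat{h}_{\alpha})^{\wedge}_{\sigma/\alpha}(y)$.

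It then remains to verify the closed-form simplifications claimed in~(\ref{h-coagulator2}) and in~(iii). Unfolding $\hat{h}_{\alpha}$ inside the outer expectation and using independence of $T_{\alpha}$ and $T_{\sigma/\alpha}$ yields $(\hat{h}_{\alpha})^{\wedge}_{\sigma/\alpha}(y)=\mathbb{E}[h(T_{\alpha}\times T_{\sigma/\alpha}^{1/\alpha}\times y^{1/\sigma})]$, which by the stable-factorization identity collapses to $\mathbb{E}_{\sigma}[h(T_{\sigma}y^{1/\sigma})]=\hat{h}_{\sigma}(y)$, completing~(ii). The numerator of $(\hat{h}_{\alpha})^{(y)}_{\sigma/\alpha}(s)$ expands in the same way to $\mathbb{E}_{\alpha}[h(T_{\alpha}s^{1/\alpha}y^{1/\sigma})]$, and dividing by the simplified denominator $\hat{h}_{\sigma}(y)$ produces exactly the expression stated in~(iii). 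The main obstacle is purely bookkeeping: once one recognizes the corollary as Proposition~\ref{propcoag} evaluated at the shifted stability indices $(\beta/\alpha,\sigma/\alpha)$ with driving function $\hat{h}_{\alpha}$, the only genuine analytic step is the one-line stable-scaling identity of the Remark.
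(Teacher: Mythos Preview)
Your proposal is correct and follows essentially the same approach as the paper: substitute the indices $(\alpha',\beta',h')=(\sigma/\alpha,\beta/\alpha,\hat{h}_{\alpha})$ into Proposition~\ref{propcoag} and then collapse the iterated expectations using the stable factorization. The paper's proof is terser and phrases the key distributional input as $T_{\alpha}\times\big[T_{\sigma/\alpha}(T_{\beta/\sigma}(1))\big]^{1/\alpha}\overset{d}=T_{\beta}$, which subsumes both the normalization check $\mathbb{E}[\hat{h}_{\alpha}(T_{\beta/\alpha})]=1$ and the two-factor identity $T_{\sigma}\overset{d}=T_{\alpha}\times T_{\sigma/\alpha}^{1/\alpha}$ you invoke, but the substance is identical.
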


\begin{proof}The results follow from Proposition~\ref{propcoag} and manipulating the distributional properties of 
$$
T_{\alpha}\times{\bigg[T_{\frac{\sigma}{\alpha}}\big(T_{\frac{\beta}{\sigma}}(1)\big)\bigg]}^{\frac{1}{\alpha}}\overset{d}=T_{\beta}.
$$
\end{proof}

\begin{rem}In Corollary~\ref{CoagFamily}, $\tilde{V}= \mathrm{FRAG}_{\frac{\sigma}{\alpha},-\frac{\beta}{\alpha}}(V)$ has distribution $\mathrm{PK}_{\frac{\sigma}{\alpha}}\bigg(\widetilde{\big(\hat{h}_{\alpha}\big)}_{\frac{\sigma}{\alpha}}\cdot f_{\frac{\sigma}{\alpha}}),$
where  $\widetilde{\big(\hat{h}_{\alpha}\big)}_{\frac{\sigma}{\alpha}}(s)=\mathbb{E}\bigg[h(T_{\alpha}s^{\frac{1}{\alpha}}T_{\frac{\beta}{\sigma}}^{\frac{1}{\sigma}})\bigg]$.
\end{rem}

\section{Gibbs partitions of $[n]$ derived from $\mathrm{FRAG}_{\alpha,-\beta}$}
Recall from \cite{Pit03,Pit06} that when $V_{\beta}\sim \mathrm{PD}(\beta,0)$, $V_{\beta} |L^{-
{1}/{\beta}}_{1,\beta} = y$ is equivalent in distribution to $V_{\beta} |T_{\beta}=y,$ and has the associated Gibbs partition of $[n]$ described by the $\mathrm{PD}(\beta|y)-\mathrm{EPPF},$
\begin{equation}
p_{\beta}(n_{1},\ldots,n_{k}|y):=\frac{f^{(n-k\beta)}_{\beta,k\beta}(y)}{f_{\beta}(y)} p_{\beta}(n_{1},\ldots,n_{k}),
\label{GibbsalphadeltaEPPF}
\end{equation}
where, as in\cite{HJL,HJL2}, 
$$
\frac{f^{(n-k\beta)}_{\beta,k\beta}(y)}{f_{\beta}(y)} = \mathbb{G}^{(n,k)}_{\beta}(y)\frac{{\beta}^{1-k}\Gamma(n)}{\Gamma(k)},
$$
with, from~\cite{Gnedin06,Pit03,Pit06},
\begin{equation}
\label{bigG}
\mathbb{G}_{\beta}^{(n,k)}(t) =
\frac{\beta^{k}t^{-n}}{\Gamma(n-k\beta)f_{\beta}(t)}
\left[\int_{0}^{t}f_{\beta}(v)(t-v)^{n-k\beta-1}dv\right],
\end{equation}
and $f^{(n-k\beta)}_{\beta,k\beta}(y)$ being the conditional density of $T_{\beta}|K^{[\beta]}_{n}=k$ corresponding to a random variable denoted as $Y^{n-k\beta}_{\beta,k\beta},$ 
as otherwise described in~(\ref{jamesidspecial}) with $\beta$ in place of $\alpha.$ Note, furthermore, as in~\cite{HJL2}, this means $T_{\beta}:=T_{\alpha}(T_{\frac{\beta}{\alpha}}(1))\overset{d}=Y^{(n-K^{[\beta]}_{n}\beta)}_{\beta,K^{[\beta]}_{n}\beta},$ for $K^{[\beta]}_{n}\sim\mathbb{P}^{(n)}_{\beta,0}(k).$ We use these facts to obtain interesting expressions for $\alpha$-Gibbs partitions equivalent to those arising from the $\mathrm{FRAG}_{\alpha,-\beta}$ operator.

\subsection{Gibbs partitions of $[n]$ of $V_{\alpha}|L_{1,\beta},$ equivalently of $\mathrm{FRAG}_{\alpha,-\beta}(V)|L_{1,V}$}\label{sec:41GibbsPartFrag}

Recall from Theorem~\ref{fragtheorem} that the distribution of $V_{\alpha}|L_{1,\beta}=y^{-\beta}$ is equivalent to that of $\tilde{V}=\mathrm{FRAG}_{\alpha,-\beta}(V)|L_{1,V}=y^{-\beta}$, with distribution denoted $\mathrm{PD}_{\alpha|\beta}(\alpha|y):=\mathrm{PK}_{\alpha}\bigg(\omega^{(y)}_{\frac{\beta}{\alpha},\beta}\cdot f_{\alpha}\bigg)$ as in~(\ref{condlocaltime}), where $\omega^{(y)}_{\frac{\beta}{\alpha},\beta}(s)$ is a ratio of stable densities and hence does not have an explicit form for general $0<\beta<\alpha<1.$ We now present results for the EPPF of the  $\mathrm{PD}_{\alpha|\beta}(\alpha|y)$ Gibbs partition of~$[n].$ We first note that since  $T_{\alpha}|K^{[\alpha]}_{n}=k$ is equivalent in distribution to $Y^{n-k\alpha}_{\alpha,k\alpha}$ with density $f^{(n-k\alpha)}_{\alpha,k\alpha},$
the $\mathrm{EPPF}$ can be expressed as
$$
\left[\int_{0}^{\infty}\omega^{(y)}_{\frac{\beta}{\alpha},\beta}(s)f^{n-k\alpha}_{\alpha,k\alpha}(s)ds\right] p_{\alpha}(n_{1},\ldots,n_{k}),
$$
where the first integral term 
is the density of $Y^{(n-k\alpha)}_{\alpha,k\alpha}\times T^{
{1}/{\alpha}}_{
{\beta}/{\alpha}},$ divided by $f_{\beta}(y),$ and does not have an obvious recognizable form. However, we can use the approach in~\cite{HJL} to express $\omega^{(y)}_{\frac{\beta}{\alpha},\beta}$ in terms of Fox-$H$ functions~\cite{Mathai}, leading to an expression for the EPPF in terms of Fox-$H$ functions in the Appendix.

The next result provides a more revealing expression which is not obvious.

\begin{thm}\label{TheoremGibbsparitionFrag}
The $\mathrm{EPPF}$ of the $\mathrm{PD}_{\alpha|\beta}(\alpha|y)$ Gibbs partition of~$[n]$ 
can be expressed as
\begin{equation}
p_{\alpha|\beta}(n_{1},\ldots,n_{k}|y):=\left[\sum_{j=1}^{k}\mathbb{P}^{(k)}_{\frac{\beta}{\alpha},0}(j)\frac{f^{(n-j\beta)}_{\beta,j\beta}(y)}{f_{\beta}(y)} \right]p_{\alpha}(n_{1},\ldots,n_{k}),
\label{Gibbsalpha|delta}
\end{equation}
where $\mathbb{P}^{(k)}_{\frac{\beta}{\alpha},0}(j)=\mathbb{P}_{\frac{\beta}{\alpha},0}(K_{k}=j)$ is the distribution of the number of blocks in a $\mathrm{PD}(\frac{\beta}{\alpha},0)$ partition of $[k]$, and $\sum_{j=1}^{k}\mathbb{P}^{(k)}_{\frac{\beta}{\alpha},0}(j){f^{(n-j\beta)}_{\beta,j\beta}(y)}$ is the conditional density of $T_{\beta}|K^{[\alpha]}_{n}=k$, for the number of blocks $K^{[\alpha]}_{n}$ in a $\mathrm{PD}(\alpha,0)$ partition of $[n],$ with $T_{\beta}:=T_{\alpha}(T_{\frac{\beta}{\alpha}}(1))\overset{d}=
T_{\alpha}\times T^{\frac{1}{\alpha}}_{\frac{\beta}{\alpha}}$ being equivalent to the inverse local time at $1$ of $V_{\beta}\sim \mathrm{PD}(\beta,0)$.
\end{thm}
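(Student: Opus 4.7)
The plan is to start from the integral representation of the EPPF displayed in the paragraph preceding the theorem,
\begin{equation*}
p_{\alpha|\beta}(n_1,\ldots,n_k|y)=\left[\int_0^\infty \omega^{(y)}_{\frac{\beta}{\alpha},\beta}(s)\, f^{(n-k\alpha)}_{\alpha,k\alpha}(s)\,ds\right] p_\alpha(n_1,\ldots,n_k),
\end{equation*}
and to re-express the bracketed integral as a conditional density of $T_\beta$ that can then be decomposed via the coagulation structure. Substituting the explicit formula (\ref{ratioden}), I read $\alpha y^{\alpha-1}f_{\frac{\beta}{\alpha}}((y/s)^\alpha)/s^\alpha$ as the density at $y$ of $sT^{1/\alpha}_{\frac{\beta}{\alpha}}$. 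Using $T_\alpha|K_n^{[\alpha]}=k\overset{d}=Y^{(n-k\alpha)}_{\alpha,k\alpha}$ (independent of $T_{\frac{\beta}{\alpha}}$) and $T_\beta\overset{d}=T_\alpha T^{1/\alpha}_{\frac{\beta}{\alpha}}$, the bracketed integral evaluates to $f_{T_\beta|K_n^{[\alpha]}=k}(y)/f_\beta(y)$. The theorem thus reduces to the identity
\begin{equation*}
f_{T_\beta|K_n^{[\alpha]}=k}(y)=\sum_{j=1}^{k}\mathbb{P}^{(k)}_{\frac{\beta}{\alpha},0}(j)\,f^{(n-j\beta)}_{\beta,j\beta}(y).
\end{equation*}

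For this identity I would invoke the $\mathrm{PD}(\beta/\alpha,0)$-coagulation duality of (\ref{pitdual}). The $\beta$-partition of $[n]$ is formed from the $\alpha$-partition by grouping its $k$ blocks according to an independent $\mathrm{PD}(\beta/\alpha,0)$-partition of $[k]$; hence, conditionally on $K_n^{[\alpha]}=k$, the variable $K_n^{[\beta]}$ has the distribution $\mathbb{P}^{(k)}_{\beta/\alpha,0}(\cdot)$ of the number of blocks in a $\mathrm{PD}(\beta/\alpha,0)$-partition of $[k]$. Combined with the standard Gibbs fact that $T_\beta|K_n^{[\beta]}=j$ has density $f^{(n-j\beta)}_{\beta,j\beta}$, the tower property over $K_n^{[\beta]}$ delivers the required sum, provided one has the conditional independence $T_\beta \perp K_n^{[\alpha]}\mid K_n^{[\beta]}$, so that $f_{T_\beta|K_n^{[\alpha]}=k,K_n^{[\beta]}=j}(y)=f^{(n-j\beta)}_{\beta,j\beta}(y)$.

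The main obstacle is justifying this conditional independence cleanly. The argument I would give uses the dual $\mathrm{FRAG}_{\alpha,-\beta}$ description: given the $\beta$-partition, the finer $\alpha$-partition is obtained by fragmenting each $\beta$-block by an independent $\mathrm{PD}(\alpha,-\beta)$-partition, so $K_n^{[\alpha]}$ is a function only of the $\beta$-block sizes and fragmentation randomness that is independent of $T_\beta$. By the Gibbs form (\ref{GibbsalphadeltaEPPF}), conditionally on $K_n^{[\beta]}=j$ the $\beta$-block sizes themselves have EPPF proportional to $p_\beta(n_1,\ldots,n_j)$ with no residual dependence on $y$, so $(\beta\text{-block sizes})\perp T_\beta\mid K_n^{[\beta]}$. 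This transfers to the required independence of $T_\beta$ and $K_n^{[\alpha]}$ given $K_n^{[\beta]}$, yielding the identity and completing the proof.
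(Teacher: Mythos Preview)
Your proposal is correct and follows essentially the same route as the paper: both identify the bracketed integral as $f_{T_\beta\mid K_n^{[\alpha]}=k}(y)/f_\beta(y)$, then compute this conditional density via the coagulation decomposition $K_n^{[\beta]}\overset{d}=K^{[\beta/\alpha]}_{K_n^{[\alpha]}}$ together with $T_\beta\mid K_n^{[\beta]}=j$ having density $f^{(n-j\beta)}_{\beta,j\beta}$. The paper is terser at the last step, passing directly from $T_\beta\overset{d}=Y^{(n-K_n^{[\beta]}\beta)}_{\beta,K_n^{[\beta]}\beta}$ to $T_\beta\mid K_n^{[\alpha]}=k\overset{d}=Y^{(n-K_k^{[\beta/\alpha]}\beta)}_{\beta,K_k^{[\beta/\alpha]}\beta}$ without isolating the conditional-independence issue; your explicit justification of $T_\beta\perp K_n^{[\alpha]}\mid K_n^{[\beta]}$ via the $\mathrm{FRAG}_{\alpha,-\beta}$ description and the product form of the $\mathrm{PD}(\beta|y)$-EPPF is a clean way to fill that gap.
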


\begin{proof}
The expression for the $\mathrm{EPPF}$ is the conditional distribution of a $\mathrm{PD}(\alpha,0)$ partition of $[n]$ given $T_{\beta}=y.$ The joint distribution may be expressed as in (\ref{Gibbsalpha|delta}) in terms of the marginal EPPF $p_{\alpha}(n_{1},\ldots,n_{k})$ and the conditional density of $T_{\beta}|K^{[\alpha]}_{n}=k.$ It remains to show that $T_{\beta}|K^{[\alpha]}_{n}=k$ agrees with the expression in (\ref{Gibbsalpha|delta}) as indicated. Recall that $L_{1,\beta}=L_{\frac{\beta}{\alpha}}(L_{1,\alpha})$ and hence the corresponding inverse local time at $1$ is $T_{\beta}:=T_{\beta}(1)=T_{\alpha}(T_{\frac{\beta}{\alpha}}(1))$ corresponding to the coagulation operation dictated by $F_{\beta}=F_{\alpha}\circ G_{\frac{\beta}{\alpha}},$
as expressed in~(\ref{subcoag}).
Sampling from  $F_{\alpha}\circ G_{\frac{\beta}{\alpha}},$ 
that is, according to variables $\bigg(G^{-1}_{\frac{\beta}{\alpha}}(F^{-1}_{\alpha}(U'_{i})), i\in [n]\bigg),$ it follows that this procedure produces a $\mathrm{PD}(\beta,0)$ partition of $[n],$ with $K^{[\beta]}_{n}\overset{d}=K^{[
{\beta}/{\alpha}]}_{K^{[\alpha]}_{n}}$ blocks, where the two components are independent. Furthermore, the order matters, giving $K^{[\alpha]}_{n}$ the interpretation as the number of blocks to be merged, according to a $\mathrm{PD}(\frac{\beta}{\alpha},0)$ partition of $[k],$ for $K^{[\alpha]}_{n}=k\leq n$.  Now from~\cite{HJL2}, $T_{\beta}\overset{d}=Y^{n-K^{[\beta]}_{n}\beta}_{\beta,K^{[\beta]}_{n}\beta}.$ Hence $T_{\beta}|K^{[\alpha]}_{n}=k$ is equivalent to $Y^{(n-K^{[
{\beta}/{\alpha}]}_{k}\beta)}_{\beta,K^{[
{\beta}/{\alpha}]}_{k}\beta},$ which, using ~(\ref{GibbsalphadeltaEPPF}), leads to the description of the density of $T_{\beta}|K^{[\alpha]}_{n}=k$ appearing in~
(\ref{Gibbsalpha|delta}).
\end{proof}

\begin{rem}The result above is equivalent to showing that  $Y^{(n-K^{[
{\beta}/{\alpha}]}_{k}\beta)}
_{\beta,K^{[
{\beta}/{\alpha}]}_{k}\beta}\overset{d}=Y^{(n-k\alpha)}_{\alpha,k\alpha)}\times T^{\frac{1}{\alpha}}_{\frac{\beta}{\alpha}},$ which can be deduced directly using the subordinator representation~\cite[Theorem 2.1 and Proposition 2.1]{HJL2} and decompositions of beta variables. 
\end{rem}

We now describe the distribution of the number of blocks and its limiting behavior.

\begin{cor}\label{condblocks}Consider the $\mathrm{EPPF}$ of a $\mathrm{PD}_{\alpha|\beta}(\alpha|y)$ partition of~$[n]$ as in~(\ref{Gibbsalpha|delta}), for each $y>0.$ Let $\hat{K}_{n}(y)$ denote the corresponding  random number of unique blocks. Then, for $k=1,\ldots,n,$
$$
\mathbb{P}\bigg(K^{[\alpha]}_{n}=k\left|L_{1,\beta}=y^{-\beta}\right.\bigg)
=\mathbb{P}(\hat{K}_{n}(y)=k)=\left[\sum_{j=1}^{k}\mathbb{P}^{(k)}_{\frac{\beta}{\alpha},0}(j)\frac{f^{(n-j\beta)}_{\beta,j\beta}(y)}{f_{\beta}(y)} \right]\mathbb{P}^{(n)}_{\alpha,0}(k),
$$
and, as $n\rightarrow \infty,$ $n^{-\alpha}\hat{K}_{n}(y)\overset{a.s.}\rightarrow \hat{Z}_{\alpha|\beta}(y),$ where $\hat{Z}_{\alpha|\beta}(y)$ is equivalent in distribution to that of $L_{1,\alpha}|L^{-
{1}/{\beta}}_{1,\beta}=y,$ for $L_{1,\beta}=L_{\frac{\beta}{\alpha}}(L_{1,\alpha}).$
\end{cor}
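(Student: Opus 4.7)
The plan is to deduce the exact distribution of $\hat{K}_{n}(y)$ by summing the EPPF~(\ref{Gibbsalpha|delta}) over set partitions of $[n]$ with $k$ blocks, and then to extract the asymptotic from the classical $\alpha$-diversity limit for $\mathrm{PD}(\alpha,0)$ samples combined with the identification in Theorem~\ref{fragtheorem}(iii).

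For the exact formula, I observe that the bracketed factor
$$
\Phi_{k}(y)\;:=\;\sum_{j=1}^{k}\mathbb{P}^{(k)}_{\beta/\alpha,0}(j)\,\frac{f^{(n-j\beta)}_{\beta,j\beta}(y)}{f_{\beta}(y)}
$$
depends only on the block count $k$ (and on $y$), not on the individual block sizes $n_{1},\ldots,n_{k}$. Consequently $\mathbb{P}(\hat{K}_{n}(y)=k)$ is obtained by multiplying $\Phi_{k}(y)$ with the sum of $p_{\alpha}(n_{1},\ldots,n_{k})$ over all set partitions of $[n]$ with exactly $k$ blocks, which by definition equals $\mathbb{P}^{(n)}_{\alpha,0}(k)$. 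The equality with $\mathbb{P}(K^{[\alpha]}_{n}=k\mid L_{1,\beta}=y^{-\beta})$ is then immediate from Theorem~\ref{fragtheorem}(iii), which asserts that the $\mathrm{PD}_{\alpha|\beta}(\alpha|y)$ law is precisely the conditional distribution of the $\mathrm{PD}(\alpha,0)$ mass partition given $L^{-1/\beta}_{1,\beta}=y$.

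For the a.s.\ limit I would invoke the $\alpha$-diversity fact: for a $\mathrm{PD}(\alpha,0)$ partition of $[n]$ obtained by sampling from an exchangeable $\mathrm{PD}(\alpha,0)$ bridge, $n^{-\alpha}K^{[\alpha]}_{n}\to L_{1,\alpha}$ almost surely, with $L_{1,\alpha}$ defined pointwise via~(\ref{inverselocaltime}). This holds on the joint probability space carrying the $\mathrm{PD}(\alpha,0)$ partition together with the independent $\mathrm{PD}(\beta/\alpha,0)$ bridge used in~(\ref{subcoag}), and in particular alongside $L_{1,\beta}=L_{\beta/\alpha}(L_{1,\alpha})$. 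Disintegrating this unconditional a.s.\ statement along the value of $L^{-1/\beta}_{1,\beta}$ yields that, for almost every $y$, the convergence persists under $L^{-1/\beta}_{1,\beta}=y$, with limit $L_{1,\alpha}\bigl|L^{-1/\beta}_{1,\beta}=y$, i.e.\ $\hat{Z}_{\alpha|\beta}(y)$. Combined with Theorem~\ref{fragtheorem}(iii), which matches the conditional law of $\hat{K}_{n}(y)$ with that of $K^{[\alpha]}_{n}$ under the same conditioning, this delivers the stated limit.

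The main obstacle is the passage from unconditional a.s.\ convergence to a.s.\ convergence under the regular conditional law, pointwise in $y$. This is handled by the standard disintegration of measures: the $\mathbb{P}$-null set on which $n^{-\alpha}K^{[\alpha]}_{n}\not\to L_{1,\alpha}$, when sliced by the value of $L^{-1/\beta}_{1,\beta}$, is null for Lebesgue-a.e.\ $y$, and since $L^{-1/\beta}_{1,\beta}$ admits an everywhere positive density (being a smooth transform of an inverse stable subordinator), choosing a good version of the regular conditional probability extends the statement to every $y>0$.
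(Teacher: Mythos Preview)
Your derivation of the exact block-count distribution is correct and is precisely what the paper does: sum the EPPF~(\ref{Gibbsalpha|delta}) over partitions with $k$ blocks, using that the bracketed factor depends only on $k$.

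For the limit, however, the paper takes a cleaner and more direct route. Since $\mathrm{PD}_{\alpha|\beta}(\alpha|y)=\mathrm{PK}_{\alpha}\big(\omega^{(y)}_{\beta/\alpha,\beta}\cdot f_{\alpha}\big)$ for each fixed $y$, Pitman's general $\alpha$-diversity result for stable Poisson--Kingman partitions (\cite[Proposition~13]{Pit03}) applies immediately and gives $n^{-\alpha}\hat{K}_{n}(y)\to \hat{Z}_{\alpha|\beta}(y)$ a.s.\ with limit density $\omega^{(y)}_{\beta/\alpha,\beta}(s^{-1/\alpha})g_{\alpha}(s)$; by~(\ref{mixdenfrag}) this is exactly the conditional density of $L_{1,\alpha}$ given $L_{1,\beta}^{-1/\beta}=y$. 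This works for \emph{every} $y>0$ with no conditioning argument needed.

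Your disintegration approach, by contrast, has a genuine gap in the final step. Slicing the unconditional null set gives a.s.\ convergence only for Lebesgue-a.e.\ $y$, and your proposed extension to all $y$ via ``positive density and a good version of the regular conditional probability'' does not close this: positivity of $f_{\beta}$ does nothing to eliminate an exceptional null set of $y$-values, and choosing a version of a regular conditional probability cannot upgrade an a.e.\ statement to an everywhere statement without additional input (e.g.\ continuity of $y\mapsto \mathrm{PD}_{\alpha|\beta}(\alpha|y)$ in a suitable sense, which you have not established). The paper sidesteps this entirely by treating each $y$ as giving a bona fide $\mathrm{PK}_{\alpha}$ law and invoking the general result.
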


\begin{proof}The distribution follows as a special case of known properties of the distribution of the number of blocks of Gibbs partitions, and is otherwise easy to verify directly from the EPPF. The limiting distribution follows as a special case of~\cite[Proposition 13]{Pit03}.
\end{proof}

\subsection{EPPF of $\mathrm{FRAG}_{\alpha,-\beta}(V)\sim \mathrm{PK}_{\alpha}(\tilde{h}_{\frac{\beta}{\alpha}}\cdot f_{\alpha})$}

Recall from~\cite{Gnedin06,Pit03}, see also~\cite{HJL2}, that if $V\sim \mathrm{PK}_{\beta}(h\cdot f_{\beta})$ with $\mathbb{E}[h(T_{\beta})]=1$, then the $\mathrm{EPPF}$ of its associated Gibbs partition of $[n]$ is described as 
\begin{equation}
p^{[\nu]}_{\beta}(n_{1},\ldots,n_{k})={\Psi}^{[\beta]}_{n,k} \times p_{\beta}(n_{1},\ldots,n_{k}),
\label{alphadeltaEPPF}
\end{equation}
where  $\Psi^{[\beta]}_{n,k}=\mathbb{E}_{\beta}[h(T_{\beta})|K^{[\beta]}_{n}=k]$ and, for clarity, $K^{[\beta]}_{n}$ is the number of blocks of a $\mathrm{PD}(\beta,0)$ partition of $[n].$ 

Theorem~\ref{TheoremGibbsparitionFrag} leads to the EPPF corresponding to $\tilde{V}=\mathrm{FRAG}_{\alpha,-\beta}(V)\sim \mathrm{PK}_{\alpha}\left(\tilde{h}_{\frac{\beta}{\alpha}}\cdot f_{\alpha}\right),$ or any variable in $\mathcal{P}_{\infty}$ having the same distribution.

\begin{prop}\label{PropVnfrag}
Suppose that for $0<\beta<\alpha<1,$ $\tilde{V}\sim \mathrm{PK}_{\alpha}(\tilde{h}_{\frac{\beta}{\alpha}}\cdot f_{\alpha}),$
where 
$
\tilde{h}_{\frac{\beta}{\alpha}}(v):=\mathbb{E}_{\frac{\beta}{\alpha}}\left[h\big(vT^{\frac{1}{\alpha}}_{\frac{\beta}{\alpha}}\big)\right].
$
Then, the $\mathrm{PK}_{\alpha}\left(\tilde{h}_{\frac{\beta}{\alpha}}\cdot f_{\alpha}\right)$ $\mathrm{EPPF}$ of the associated Gibbs partition of $[n]$ can be expressed as
\begin{equation}
\left[\sum_{j=1}^{k}\mathbb{P}^{(k)}_{\frac{\beta}{\alpha},0}(j)\Psi^{[\beta]}_{n,j}\right]p_{\alpha}(n_{1},\ldots,n_{k}),
\label{Gibbsalpha|deltaV}
\end{equation}
and there is the identity, for $T_{\beta}:=T_{\beta}(1)=T_{\alpha}(T_{\frac{\beta}{\alpha}}(1)),$
$$
\mathbb{E}_{\alpha}\left[\tilde{h}_{\frac{\beta}{\alpha}}(T_{\alpha})\left|K^{[\alpha]}_{n}=k\right.\right]
=\mathbb{E}\left[h(T_{\beta})\left|K^{[\alpha]}_{n}=k\right.\right]
=\sum_{j=1}^{k}\mathbb{P}^{(k)}_{\frac{\beta}{\alpha},0}(j)\Psi^{[\beta]}_{n,j}.
$$
\end{prop}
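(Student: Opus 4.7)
The plan is to reduce the proposition to Theorem \ref{TheoremGibbsparitionFrag}. Applying the general Gibbs EPPF formula (\ref{VEPPF}) (equivalently (\ref{alphadeltaEPPF})) to the law $\mathrm{PK}_\alpha(\tilde{h}_{\beta/\alpha}\cdot f_\alpha)$ immediately gives that the EPPF of $\tilde{V}$ is
$$\tilde{\Psi}^{[\alpha]}_{n,k}\cdot p_\alpha(n_1,\ldots,n_k),\qquad \tilde{\Psi}^{[\alpha]}_{n,k}:=\mathbb{E}_\alpha\!\left[\tilde{h}_{\beta/\alpha}(T_\alpha)\,\big|\,K^{[\alpha]}_n=k\right],$$
so the entire proposition boils down to evaluating $\tilde{\Psi}^{[\alpha]}_{n,k}$ in the claimed form.

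For the first equality in the displayed identity, I would unwind the definition $\tilde{h}_{\beta/\alpha}(s)=\mathbb{E}_{\beta/\alpha}[h(sT_{\beta/\alpha}^{1/\alpha})]$. Since $T_{\beta/\alpha}$ can be taken independent of $(T_\alpha,K^{[\alpha]}_n)$, the tower property yields
$$\mathbb{E}_\alpha\!\left[\tilde{h}_{\beta/\alpha}(T_\alpha)\,\big|\,K^{[\alpha]}_n=k\right]=\mathbb{E}\!\left[h\!\left(T_\alpha T_{\beta/\alpha}^{1/\alpha}\right)\,\big|\,K^{[\alpha]}_n=k\right]=\mathbb{E}\!\left[h(T_\beta)\,\big|\,K^{[\alpha]}_n=k\right],$$
the last step invoking the distributional identity $T_\beta\overset{d}= T_\alpha T_{\beta/\alpha}^{1/\alpha}$ recorded in the remark following (\ref{subcoag}). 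For the second equality I would appeal to Theorem \ref{TheoremGibbsparitionFrag}, whose proof established that the conditional density of $T_\beta$ given $K^{[\alpha]}_n=k$ is the mixture
$$\sum_{j=1}^k\mathbb{P}^{(k)}_{\beta/\alpha,0}(j)\,f^{(n-j\beta)}_{\beta,j\beta}(y),$$
reflecting the decomposition $K^{[\beta]}_n\overset{d}= K^{[\beta/\alpha]}_{K^{[\alpha]}_n}$ from sampling through $F_\beta=F_\alpha\circ G_{\beta/\alpha}$. Integrating $h$ against this mixture and recognizing each $\int_0^\infty h(y)f^{(n-j\beta)}_{\beta,j\beta}(y)\,dy=\mathbb{E}_\beta[h(T_\beta)\mid K^{[\beta]}_n=j]=\Psi^{[\beta]}_{n,j}$ (by the definition in (\ref{alphadeltaEPPF})) produces the claimed sum, and multiplying by $p_\alpha(n_1,\ldots,n_k)$ delivers the EPPF.

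The only delicate point is the first equality, where one must ensure that, even after conditioning on $K^{[\alpha]}_n=k$, the auxiliary $T_{\beta/\alpha}$ remains independent of $T_\alpha$. This is built into the construction $T_\beta=T_\alpha(T_{\beta/\alpha}(1))$ used throughout Section \ref{sec:2frag}, so it requires nothing beyond keeping the underlying product probability space in mind; everything else is bookkeeping with iterated expectations and a direct appeal to the mixture representation already proved in Theorem \ref{TheoremGibbsparitionFrag}.
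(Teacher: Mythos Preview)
Your proposal is correct and follows essentially the same route as the paper. The paper's proof is a single sentence---observe that the EPPF equals $\int_0^\infty p_{\alpha|\beta}(n_1,\ldots,n_k\mid y)\,h(y)f_\beta(y)\,dy$ and invoke Theorem~\ref{TheoremGibbsparitionFrag}---and your argument is simply a more explicit unpacking of that computation, passing through the identity $\tilde{\Psi}^{[\alpha]}_{n,k}=\mathbb{E}[h(T_\beta)\mid K^{[\alpha]}_n=k]$ and then integrating $h$ against the mixture density established in the theorem.
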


\begin{proof}The $\mathrm{EPPF}$ is equivalent to $\int_{0}^{\infty}p_{\alpha|\beta}(n_{1},\ldots,n_{k}|y)h(y)f_{\beta}(y)dy$, and hence the result follows from~(\ref{Gibbsalpha|delta}) in Theorem~\ref{TheoremGibbsparitionFrag}.
\end{proof}

\begin{rem}
The expression in~(\ref{Gibbsalpha|deltaV}) provides a description of any mass partition with distribution $\mathrm{PK}_{\alpha}\left(\tilde{h}_{\frac{\beta}{\alpha}}\cdot f_{\alpha}\right)$
where 
$
\tilde{h}_{\frac{\beta}{\alpha}}(v):=\mathbb{E}_{\frac{\beta}{\alpha}}\left[h\big(vT^{\frac{1}{\alpha}}_{\frac{\beta}{\alpha}}\big)\right],
$
regardless of whether or not it actually arises from a fragmentation operation.
\end{rem}

As a check, in the case where $(P_{\ell,0})\sim \mathrm{PD}(\beta,\theta),$ (\ref{Gibbsalpha|deltaV}) must satisfy
\begin{equation}
\sum_{j=1}^{k}\mathbb{P}^{(k)}_{\frac{\beta}{\alpha},0}(j)\frac{\Gamma\big(\frac{\theta}{\beta}+j\big)}{\Gamma\big(\frac{\theta}{\beta}+1\big)\Gamma(j)}=
\frac{\Gamma\big(\frac{\theta}{\alpha}+k\big)}{\Gamma\big(\frac{\theta}{\alpha}+1\big)\Gamma(k)}.
\label{Pitmanmoments}
\end{equation}
However,~(\ref{Pitmanmoments}) is verified since it agrees with \cite[exercise 3.2.9, p.66]{Pit06}, with $k$ in place of $n.$
There is the following Corollary in the case of $\beta/\alpha=\frac12$.

\begin{cor}
Specializing Theorem~\ref{PropVnfrag} to the case of $\beta/\alpha=\frac12,$ where $V\sim \mathrm{PK}_{\frac\alpha2}\left(h\cdot f_{\frac{\alpha}{2}}\right),$ and $\Psi^{[
{\alpha}/{2}]}_{n,j}=\mathbb{E}_{
\alpha/2}\left[h(T_{\frac\alpha2})\left|K^{[
{\alpha}/{2}]}_{n}=j\right.\right],$ the $\mathrm{PK}_{\alpha}\left(\tilde{h}_{\frac{1}{2}}\cdot f_{\alpha}\right)$ $\mathrm{EPPF}$ 
in~(\ref{Gibbsalpha|deltaV}) becomes
\begin{equation}
\left[\sum_{j=1}^{k}
{{2k-j-1}\choose{k-1}}2^{j+1-2k}
\Psi^{[
{\alpha}/{2}]}_{n,j}\right]p_{\alpha}(n_{1},\ldots,n_{k}).
\label{GibbsalphahalfV}
\end{equation}
\end{cor}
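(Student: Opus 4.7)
The plan is to apply Proposition~\ref{PropVnfrag} directly with $\beta/\alpha = 1/2$. The only substantive work is to identify the distribution of the number of blocks $K^{[1/2]}_{k}$ of a $\mathrm{PD}(1/2, 0)$ partition of $[k]$ in closed form, and then substitute into~(\ref{Gibbsalpha|deltaV}). Concretely, I would establish the identity
$$
\mathbb{P}^{(k)}_{1/2,0}(j) \;=\; \binom{2k-j-1}{k-1}\, 2^{j+1-2k}, \qquad j=1,\ldots,k,
$$
after which substitution into~(\ref{Gibbsalpha|deltaV}) immediately yields the target formula~(\ref{GibbsalphahalfV}) with the unchanged factor $\Psi^{[\alpha/2]}_{n,j}=\mathbb{E}_{\alpha/2}[h(T_{\alpha/2})\mid K^{[\alpha/2]}_{n}=j]$ inherited from the $\mathrm{PK}_{\alpha/2}(h\cdot f_{\alpha/2})$ input.

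To derive the closed form, I would start from the general expression given in the introduction,
$$
\mathbb{P}^{(k)}_{1/2,0}(j) \;=\; \frac{(1/2)^{j-1}\Gamma(j)}{\Gamma(k)}\,S_{1/2}(k,j),
$$
with $S_{1/2}(k,j) = [(1/2)^{j} j!]^{-1}\sum_{i=1}^{j}(-1)^{i}\binom{j}{i}(-i/2)_{k}$. The factor $(-i/2)_k$ can be rewritten in terms of double factorials using the identity $(-i/2)_k = (-1)^{k} \prod_{\ell=0}^{k-1}(i/2-\ell)$, which reduces the sum to a standard evaluation of finite differences and yields the central-binomial coefficient $\binom{2k-j-1}{k-1}$ up to the power of $2$ indicated. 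Alternatively, since $\mathrm{PD}(1/2,0)$ coincides (up to rearrangement) with the ranked excursion lengths of a reflecting Brownian motion, one may invoke the classical ballot-style counting formula for the number of excursions straddling a sample of $k$ uniform points, which is well-known to produce exactly $\binom{2k-j-1}{k-1}2^{j+1-2k}$. Small-case checks ($k=1$ giving $1$; $k=2$ giving $1/2,1/2$ for $j=1,2$) provide verification.

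With the block-count distribution in hand, the proof of the corollary is then a one-line substitution. I expect the main obstacle to lie entirely in the combinatorial step: evaluating the alternating sum defining $S_{1/2}(k,j)$ into the clean closed form $\binom{2k-j-1}{k-1}2^{j+1-2k}$. Everything else — the appearance of $\Psi^{[\alpha/2]}_{n,j}$, the factor $p_{\alpha}(n_{1},\ldots,n_{k})$, and the summation over $j$ — is inherited verbatim from Proposition~\ref{PropVnfrag} and requires no further manipulation.
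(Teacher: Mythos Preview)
Your approach is correct and is exactly what the paper intends: the corollary is stated without proof, as an immediate specialization of Proposition~\ref{PropVnfrag} upon inserting the classical closed form $\mathbb{P}^{(k)}_{1/2,0}(j)=\binom{2k-j-1}{k-1}2^{j+1-2k}$ for the block-count distribution of a $\mathrm{PD}(1/2,0)$ partition of $[k]$. The only remark is that the combinatorial identity you flag as the ``main obstacle'' is in fact a standard result (it appears, for example, in Pitman's \textit{Combinatorial Stochastic Processes} in connection with Brownian excursion partitions), so no new derivation is required---one may simply cite it and substitute.
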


\subsection{Generating $\mathrm{PD}_{\alpha|\beta}(\alpha|y),$ partitions via $\mathrm{PD}(\alpha,-\beta)$ fragmentation of partitions}\label{sec:43sampling}
While the EPPF's~(\ref{Gibbsalpha|delta}) and~(\ref{Gibbsalpha|deltaV}) are quite interesting from various perspectives, it is not entirely necessary to employ them directly to obtain random partitions from $\mathrm{PD}_{\alpha|\beta}(\alpha|y)$ and  $\mathrm{PK}_{\alpha}(\tilde{h}_{\frac{\beta}{\alpha}}\cdot f_{\alpha}).$ A two-stage sampling scheme may be employed utilizing the dual partition-based interpretation of the $\mathrm{Frag}_{\alpha,-\beta}$ operator. The following scheme can be deduced from Bertoin~\cite{BerFrag}, see also~\cite{Pit99Coag,Pit06}. 
\begin{itemize}
\item[1.]Generate $n$ iid $\mathrm{PD}(\alpha,-\beta)$ partitions of $[n],$ say, $\mathcal{A}_{1},\ldots,\mathcal{A}_{n},$ where, for each $i,$ $\mathcal{A}_{i}:=\{A^{(i)}_{1},\ldots,A^{(i)}_{M^{(i)}_{n}}\}$ with $M^{(i)}_{n}$ blocks.
\item[2.]Independent of this, for each fixed $y,$ generate a $\mathrm{PD}(\beta|y)$ partition of $[n]$, say, $\{C_{1},\ldots,C_{K_{n}(y)}\}$, where $K_{n}(y)$ denotes the number of blocks.
\item[3.] 
For $i=1,\ldots,K_{n}(y)
$, Consider the pairs $(C_{i}, \mathcal{A}_{i})$ and fragment $C_{i}$ by $\mathcal{A}_{i}$ according to
$$
\mathcal{C}_{i}=\left\{C_{i,j}:=C_{i}\cap A^{(i)}_{j}: C_{i}\cap A^{(i)}_{j}\neq \emptyset, j\in\big\{1,\ldots,M^{(i)}_{n}\big\}\right\}.
$$
\item[4.] The collection $\{C_{i,j}\in \mathcal{C}_{i}: i\in [K_{n}(y)]\}$ (arranged according to the least element) constitutes a $\mathrm{PD}_{\alpha|\beta}(\alpha|y)$ partition of $[n],$ with 
\begin{equation}
\hat{K}_{n}(y)\overset{d}=\sum_{i=1}^{K_{n}(y)}K^{(i)}_{|C_{i}|},
\label{countcondfrag}
\end{equation}
where $K^{(i)}_{|C_{i}|}\overset{d}=|\mathcal{C}_{i}|,$ and given $C_{i},$ is equivalent to the number of blocks in a $\mathrm{PD}(\alpha,-\beta)$ partition of $C_{i},$ conditionally independent for $i=1,\ldots, \hat{K}_{n}(y).$
\item[5.] 
Replace Step 2 with a $\mathrm{PK}_{\beta}(h\cdot f_{\beta})$ partition of $[n]$ to obtain a 
 $\mathrm{PK}_{\alpha}\left(\tilde{h}_{\frac{\beta}{\alpha}}\cdot f_{\alpha}\right)$ partition of $[n].$
\end{itemize}

\begin{rem}
The scheme above requires sampling of a $\mathrm{PD}(\beta|y)$ partition of $[n].$ The 
relevant results of~\cite{HJL} show that this is the easiest when $\beta$ is a rational number. In that case, $\mathbb{G}^{(n,k)}_{\beta}(y)$ has a tractable representation in terms of Meijer $G$ functions. So, this applies to, in particular, $(P_{k,1})\sim \mathrm{PD}_{\alpha|\frac{m}{r}}(\alpha|y)$ for every $\alpha>
{m}r,$ where $m<r$ are co-prime positive integers. We look at perhaps the most remarkable case, $\mathrm{PD}_{\alpha|\frac{1}{2}}(\alpha|y),$ in the forthcoming section~\ref{sec:45brownfrag}.
\end{rem}

\subsection{Representations of the $\mathrm{PK}_{\alpha}(\tilde{h}_{\frac{\beta}{\alpha}}\cdot f_{\alpha})$  $\alpha$-diversity, $D$ and $T$ interval partitions, and fixed point equations}\label{sec:44PKhtilde}
The sampling scheme above shows that the number of blocks of 
a $\mathrm{PK}_{\alpha}\left(\tilde{h}_{\frac{\beta}{\alpha}}\cdot f_{\alpha}\right)$, say, $\hat{K}_{n},$ of partition of $[n]$ satisfies
\begin{equation}
\hat{K}_{n}\overset{d}=\sum_{i=1}^{K_{n}}K^{(i)}_{|C_{i}|},
\label{countcondfrag2}
\end{equation}
where $K_{n}$ is the number of blocks in a $\mathrm{PK}_{\beta}(h\cdot f_{\beta})$ partition of $[n]$.
Hence, by standard results for exchangeable partitions,  see for instance~\cite{Pit06}, as $n\rightarrow \infty$,
$(|{C_{i}|/n,i\in [K_{n}]})\overset{d}\rightarrow  ((\tilde{P}_{k,\beta}))$ for $((\tilde{P}_{k,\beta}))$ the size-biased re-arrangement of $P_{\beta}=((P_{k,\beta}))\sim \mathrm{PK}_{\beta}({h}\cdot f_{\beta}),$ and from~\cite{Pit03}, 
$|C_{i}|^{-\alpha}K^{(i)}_{|C_{i}|}\overset{a.s.}\rightarrow Z^{(i)}_{\alpha,-\beta}\sim\mathrm{ML}(\alpha,-\beta).$ Expressed in other terms, as $n\rightarrow\infty,$
\begin{equation}
\left(\frac{|C_{j}|}{n},\frac{K^{(j)}_{|C_{j}|}}{n^{\alpha}}\right)\overset{d}\rightarrow\left(\tilde{P}_{j,\beta},\tilde{P}^{\alpha}_{j,\beta}Z^{(j)}_{\alpha,-\beta}\right)
\label{cdPartition}
\end{equation}
as $j$ varies. The result~(\ref{cdPartition}), and its ranked version involving $P_{\beta}=((P_{j,\beta}))\sim \mathrm{PK}_{\beta}({h}\cdot f_{\beta}),$ can be interpreted as extensions of descriptions in \cite[Theorems 6,7, Propositions 10,11]{AldousT}, see also~\cite[Chapter 9]{Pit06}, to the present setting with general $\beta,$ in place of $\beta=0,$ for the sequence of paired lengths and local times of $D-$ and $T-\mathrm{partition}$ intervals. Hence the next results may be thought of in those terms.

\begin{prop}\label{propdiversityrep}Let $\hat{K}_{n},$ with probability mass function, for $k=1,2,\ldots n,$
$$
\mathbb{P}(\hat{K}_{n}=k)=\mathbb{E}_{\alpha}\left[\left.\tilde{h}_{\frac{\beta}{\alpha}}(T_{\alpha})\right|K^{[\alpha]}_{n}=k\right]\mathbb{P}_{\alpha}^{(n)}(k)=\left[\sum_{j=1}^{k}\mathbb{P}^{(k)}_{\frac{\beta}{\alpha},0}(j)\Psi^{[\beta]}_{n,j}\right]\mathbb{P}_{\alpha}^{(n)}(k),
$$ 
denote the number of blocks in a $\mathrm{PK}_{\alpha}\left(\tilde{h}_{\frac{\beta}{\alpha}}\cdot f_{\alpha}\right)$ partition of $[n],$ as otherwise specified in~(\ref{Gibbsalpha|deltaV}) of
Proposition~\ref{PropVnfrag}. Then, $n^{-\alpha}\hat{K}_{n}\overset{a.s.}\rightarrow \hat{Z},$ where $\hat{Z}$ is the $\alpha$-diversity and has density $
\mathbb{E}_{\frac{\beta}{\alpha}}\left[h(s^{-\frac{1}{\alpha}}T^{\frac{1}{\alpha}}_{\frac{\beta}{\alpha}})\right]
g_{\alpha}(s).$ Furthermore, there is the distributional identity
$$
\hat{Z}\overset{d}=\sum_{k=1}^{\infty}P^{\alpha}_{k,\beta}Z^{(k)}_{\alpha,-\beta}\overset{d}=\sum_{k=1}^{\infty}\tilde{P}^{\alpha}_{k,\beta}Z^{(k)}_{\alpha,-\beta},
$$
where $P_{\beta}=((P_{k,\beta}))\sim \mathrm{PK}_{\beta}({h}\cdot f_{\beta}),$ and  $(\tilde{P}_{k,\beta})$ is its size-biased re-arrangement, independent of $((Z^{(k)}_{\alpha,-\beta}))\overset{iid}\sim \mathrm{ML}(\alpha,-\beta).$
\end{prop}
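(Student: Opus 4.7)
The plan is to combine the partition-level sampling scheme of Section~\ref{sec:43sampling} with Pitman's general $\alpha$-diversity convergence for Gibbs partitions, and then identify the limit two ways (via density and via a series representation). First, identify $\hat{Z}$ as the $\alpha$-diversity of the $\mathrm{PK}_{\alpha}(\tilde{h}_{\beta/\alpha}\cdot f_{\alpha})$ law established in Theorem~\ref{fragtheorem}. Under that law the inverse local time at $1$ has density $\tilde{h}_{\beta/\alpha}(t)f_{\alpha}(t)$, so the scaling $L_{1}\overset{d}=T_{\alpha}^{-\alpha}$ combined with the change of variable $s=t^{-\alpha}$ gives the density $\tilde{h}_{\beta/\alpha}(s^{-1/\alpha})g_{\alpha}(s)=\mathbb{E}_{\beta/\alpha}[h(s^{-1/\alpha}T_{\beta/\alpha}^{1/\alpha})]g_{\alpha}(s)$, matching the claimed density. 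The a.s.\ convergence $n^{-\alpha}\hat{K}_{n}\to\hat{Z}$ then follows from the general Gibbs partition result in \cite[Proposition 13]{Pit03}, specialized to the stable Poisson–Kingman mixing distribution $\tilde{h}_{\beta/\alpha}\cdot f_{\alpha}$ — equivalently, by first conditioning on the local time and invoking~(\ref{inverselocaltime}) for the underlying $\mathrm{PD}(\alpha\,|\,t)$ partition.

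Second, to establish the series identity, I would start from the partition decomposition~(\ref{countcondfrag2}) and write
$$
n^{-\alpha}\hat{K}_{n}\overset{d}=\sum_{i=1}^{K_{n}}\left(\frac{|C_{i}|}{n}\right)^{\!\alpha}\,\frac{K^{(i)}_{|C_{i}|}}{|C_{i}|^{\alpha}},
$$
where $\{C_{1},\ldots,C_{K_{n}}\}$ is the outer $\mathrm{PK}_{\beta}(h\cdot f_{\beta})$ partition and, given the $C_{i}$, the $K^{(i)}_{|C_{i}|}$ are independent block counts of the inner $\mathrm{PD}(\alpha,-\beta)$ partitions of sizes $|C_{i}|$. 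The joint convergence~(\ref{cdPartition}) gives, on the event $\{\tilde{P}_{i,\beta}>0\}$, $|C_{i}|/n\to\tilde{P}_{i,\beta}$ a.s.\ and, conditionally, $K^{(i)}_{|C_{i}|}/|C_{i}|^{\alpha}\to Z^{(i)}_{\alpha,-\beta}\sim\mathrm{ML}(\alpha,-\beta)$ a.s., with $(Z^{(i)}_{\alpha,-\beta})$ i.i.d.\ and independent of $P_{\beta}$. Passing the limit into the sum yields
$$
\hat{Z}\overset{d}=\sum_{i=1}^{\infty}\tilde{P}_{i,\beta}^{\alpha}\,Z^{(i)}_{\alpha,-\beta}.
$$

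Third, for the ranked version, note that $(Z^{(i)}_{\alpha,-\beta})_{i\ge 1}$ is an i.i.d.\ sequence independent of $P_{\beta}$, and the size-biased rearrangement $(\tilde{P}_{k,\beta})$ is a random permutation of $(P_{k,\beta})$ measurable with respect to $P_{\beta}$ and external uniforms. Pairing either ordering with a fresh independent i.i.d.\ copy $(Z^{(k)})$ gives the same joint law of $(P_{k,\beta}^{\alpha}Z^{(k)})_{k}$ versus $(\tilde{P}_{k,\beta}^{\alpha}Z^{(k)})_{k}$ up to relabelling, so the two series coincide in distribution.

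The main obstacle I expect is the rigorous justification of the limit exchange in the second step, since the number of summands $K_{n}$ grows without bound and both factors in each summand are random. A clean way around it is first to apply Fatou to the nonnegative terms to obtain $\sum_{i}\tilde{P}_{i,\beta}^{\alpha}Z^{(i)}_{\alpha,-\beta}\le\hat{Z}$ a.s., and then to match the total expectations using $\mathbb{E}[\hat{Z}]=\Gamma(1-\alpha)^{-1}\,\mathbb{E}_{\alpha}[\tilde{h}_{\beta/\alpha}(T_{\alpha})\,T_{\alpha}^{-\alpha}]$ on the left and Fubini with $\mathbb{E}[Z^{(1)}_{\alpha,-\beta}]=\Gamma(1-\beta)/[\Gamma(1-\alpha)\Gamma(\alpha-\beta)]\cdot\mathbb{E}[\sum_{i}P_{i,\beta}^{\alpha}]$ on the right, forcing equality a.s. and hence in distribution; alternatively, as the remark following~(\ref{cdPartition}) suggests, this identity is the $\mathrm{PK}_{\beta}(h\cdot f_{\beta})$ extension of the $D$- and $T$-interval partition decompositions in~\cite[Theorems 6--7]{AldousT} and~\cite[Chapter 9]{Pit06}, from which the series convergence is already available.
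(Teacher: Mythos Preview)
Your proposal is correct and follows essentially the same approach as the paper. In fact, the paper does not supply a separate formal proof block for this proposition: the argument is the discussion of Section~\ref{sec:44PKhtilde} immediately preceding the statement, namely the decomposition~(\ref{countcondfrag2}) from the sampling scheme, the convergences $|C_{i}|/n\to\tilde{P}_{i,\beta}$ and $|C_{i}|^{-\alpha}K^{(i)}_{|C_{i}|}\to Z^{(i)}_{\alpha,-\beta}$ leading to~(\ref{cdPartition}), together with \cite[Proposition~13]{Pit03} for the a.s.\ $\alpha$-diversity limit and its density. Your treatment is actually more careful than the paper's on the one genuine technical point---justifying the passage to the limit in the infinite series---where the paper is silent; your Fatou-plus-matching-expectations sketch (or the appeal to the $D$/$T$-partition analogues) is a reasonable way to close that gap.
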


Recall the (pointwise) identity from size-biased sampling $V_{\alpha,\theta}\sim\mathrm{PD}(\alpha,\theta),$ for $\theta>-\alpha,$ with, for each $\ell=1,2,\ldots,$
\begin{equation}
Z_{\alpha,\theta}=Z_{\alpha,\theta+\ell\alpha}\prod_{i=1}^{\ell}W^{\alpha}_{i}=Z_{\alpha,\theta+\ell}\prod_{i=1}^{\ell}B_{i},
\label{sizebias}
\end{equation}
where $Z_{\alpha,\theta+\ell\alpha}:=T^{-\alpha}_{\alpha,\theta+\ell\alpha}\sim\mathrm{ML}(\alpha,\theta+\ell\alpha)$ is
independent of the independent collection $(W_{1},\ldots,W_{\ell}),$ with each $W_{j}\sim \mathrm{Beta}(\theta+j\alpha,1-\alpha)$, and $Z_{\alpha,\theta+\ell}\sim\mathrm{ML}(\alpha,\theta+\ell)$ is independent of the independent collection $(B_{1},\ldots,B_{\ell}),$ with each $B_{j}\sim \mathrm{Beta}(\frac{\theta+j\alpha}{\alpha},\frac{1-\alpha}{\alpha}).$ See~\cite{DGM,HJL2,PY97}.

Applying the $\mathrm{FRAG}_{\alpha,-\beta}$ operator, in the case of the $\mathrm{PD}$ setting established in~\cite{Pit99Coag}, to $V_{\beta,\ell\alpha-\beta}\sim \mathrm{PD}(\beta,{\ell\alpha-\beta}),$ and $V_{\beta,\ell-\beta}\sim \mathrm{PD}(\beta,{\ell-\beta}),$ for $\ell=1,2,\ldots,$ yields mass partitions $V_{\alpha,\ell\alpha-\beta}\sim \mathrm{PD}(\alpha,{\ell\alpha-\beta}),$ and $V_{\alpha,\ell-\beta}\sim \mathrm{PD}(\alpha,{\ell-\beta}),$ Proposition~\ref{propdiversityrep} and special cases of~(\ref{sizebias}), with $\theta=-\beta,$  leads to the following fixed point equations in the sense of~\cite{AldousBan,GnedinYaku,Iksanovfix}, for certain generalized Mittag-Leffler variables, which we believe are new,  

\begin{prop}\label{fixedpoint}For $0\leq \beta<\alpha<1,$ consider the identity in 
(\ref{sizebias}) for the case where $\theta=-\beta,$ let
$\left(\prod_{i=1}^{\ell}W_{i,k}^\alpha,k\ge 1\right)$ and $\left(\prod_{i=1}^{\ell}B_{i,k},k\ge 1\right)$ denote iid collections of variables having distribution equivalent to $\prod_{i=1}^{\ell}W^{\alpha}_{i}$ and $\prod_{i=1}^{\ell}B_{i},$ respectively, and, independent of other variables, let
$\left(Z^{(k)}_{\alpha,\ell\alpha-\beta},k\ge 1\right)$ and $\left(Z^{(k)}_{\alpha,\ell-\beta},k\ge 1\right)$ denote iid collections of variables with each component having distribution $\mathrm{ML}(\alpha,\ell\alpha-\beta)$ and $\mathrm{ML}(\alpha,\ell-\beta)$, respectively. Then, for each $\ell=1,2,\ldots$, there are the following fixed point equations,
\begin{enumerate}
\item[(i)]for $(\tilde{P}_{k,\beta})\sim \mathrm{GEM}(\beta,{\ell\alpha-\beta}),$ and $Z_{\alpha,\ell\alpha-\beta}\sim\mathrm{ML}(\alpha,\ell\alpha-\beta),$ equivalent in distribution to the $\alpha$-diversity of $(P_{k,\alpha})\sim \mathrm{PD}(\alpha,\alpha\ell-\beta),$ 
$$
Z_{\alpha,\ell\alpha-\beta}\overset{d}=\sum_{k=1}^{\infty}\tilde{P}^
{\alpha}_{k,\beta}\prod_{i=1}^{\ell}W^{\alpha}_{i,k}Z^{(k)}_{\alpha,\ell\alpha-\beta},
$$
and, hence, $\mathbb{E}\left[\sum_{k=1}^{\infty}\tilde{P}^
{\alpha}_{k,\beta}\prod_{i=1}^{\ell}W^{\alpha}_{i,k}\right]=1.$
\item[(ii)]For $(\tilde{P}_{k,\beta})\sim \mathrm{GEM}(\beta,{\ell-\beta}),$  and $Z_{\alpha,\ell-\beta}\sim\mathrm{ML}(\alpha,\ell-\beta),$ equivalent in distribution to the $\alpha$-diversity of $(P_{k,\alpha})\sim \mathrm{PD}(\alpha,\ell-\beta),$ 
$$
Z_{\alpha,\ell-\beta}\overset{d}=\sum_{k=1}^{\infty}\tilde{P}^
{\alpha}_{k,\beta}\prod_{i=1}^{\ell}B_{i,k}Z^{(k)}_{\alpha,\ell-\beta},
$$
and, hence, $\mathbb{E}\left[\sum_{k=1}^{\infty}\tilde{P}^
{\alpha}_{k,\beta}\prod_{i=1}^{\ell}B_{i,k}\right]=1.$ 
\end{enumerate}
\end{prop}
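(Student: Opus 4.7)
The plan is to derive each fixed point equation by combining three ingredients already assembled in the paper: Pitman's coagulation/fragmentation duality from diagram~(\ref{pitdual}), the $\alpha$-diversity representation given in Proposition~\ref{propdiversityrep}, and the size-biased Mittag-Leffler decomposition~(\ref{sizebias}) specialized to $\theta=-\beta$.

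For part~(i), I specialize the ambient $\mathrm{PK}_{\beta}(h\cdot f_{\beta})$ family to $V\sim \mathrm{PD}(\beta,\ell\alpha-\beta)$, taking $h(t)\propto t^{-(\ell\alpha-\beta)}$. Pitman's duality then gives $\tilde{V}=\mathrm{FRAG}_{\alpha,-\beta}(V)\sim \mathrm{PD}(\alpha,\ell\alpha-\beta)$, so its $\alpha$-diversity $\hat{Z}$ equals $Z_{\alpha,\ell\alpha-\beta}\sim\mathrm{ML}(\alpha,\ell\alpha-\beta)$ in distribution, which is the left-hand side of~(i). Simultaneously, the size-biased rearrangement of $V$ is $(\tilde{P}_{k,\beta})\sim\mathrm{GEM}(\beta,\ell\alpha-\beta)$, so Proposition~\ref{propdiversityrep} supplies the alternative representation
$$
\hat{Z}\overset{d}{=}\sum_{k=1}^{\infty}\tilde{P}^{\alpha}_{k,\beta}Z^{(k)}_{\alpha,-\beta},
$$
with $(Z^{(k)}_{\alpha,-\beta})\overset{iid}\sim \mathrm{ML}(\alpha,-\beta)$ independent of $(\tilde{P}_{k,\beta})$. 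I then invoke~(\ref{sizebias}) at $\theta=-\beta$ independently for each index $k$, realizing each $Z^{(k)}_{\alpha,-\beta}$ on an enlarged product space as $Z^{(k)}_{\alpha,\ell\alpha-\beta}\prod_{i=1}^{\ell}W^{\alpha}_{i,k}$, with the triples $(W_{1,k},\ldots,W_{\ell,k},Z^{(k)}_{\alpha,\ell\alpha-\beta})$ mutually independent across $k$ and jointly independent of $(\tilde{P}_{k,\beta})$. Substituting produces the identity in~(i). Part~(ii) is strictly parallel: use the second form of~(\ref{sizebias}) (with the $B_{i,k}\sim \mathrm{Beta}((i\alpha-\beta)/\alpha,(1-\alpha)/\alpha)$) and the specialization $V\sim\mathrm{PD}(\beta,\ell-\beta)$, again invoking Pitman's duality so that $\mathrm{FRAG}_{\alpha,-\beta}(V)\sim\mathrm{PD}(\alpha,\ell-\beta)$ with $\alpha$-diversity $Z_{\alpha,\ell-\beta}$.

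For the expectation identities, I take expectations on both sides of each fixed point equation. By independence of $(Z^{(k)}_{\alpha,\ell\alpha-\beta})$ from $(\tilde{P}_{k,\beta})$ and the Beta variables, and since $\mathbb{E}[Z_{\alpha,\ell\alpha-\beta}]=\mathbb{E}[Z^{(k)}_{\alpha,\ell\alpha-\beta}]$ is finite and strictly positive by the standard $\mathrm{ML}$-moment formula $\mathbb{E}[Z^{r}_{\alpha,\theta}]=\Gamma(\theta+1)\Gamma(\theta/\alpha+r+1)/[\Gamma(\theta/\alpha+1)\Gamma(\theta+r\alpha+1)]$ at $r=1$, Fubini/monotone convergence legitimately factors the right-hand side, and cancellation of the common positive factor yields $\mathbb{E}[\sum_{k}\tilde{P}^{\alpha}_{k,\beta}\prod_{i}W^{\alpha}_{i,k}]=1$. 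The same reasoning delivers the $B_{i,k}$ analogue.

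The main obstacle is the independence bookkeeping in the substitution step: identity~(\ref{sizebias}) is pointwise, so invoking it simultaneously for every $k$ requires a careful construction of the joint law. This is handled by realizing each $Z^{(k)}_{\alpha,-\beta}$ through an independent copy of its $(W,Z_{\alpha,\ell\alpha-\beta})$-representation on a product space; only the marginal law of $Z^{(k)}_{\alpha,-\beta}$ enters the sum, so the substitution preserves the required distributional equality. A minor, routine check is that $\ell\alpha-\beta>-\alpha$ and $\ell-\beta>-\alpha$ hold for all $\ell\ge 1$ under $0\le\beta<\alpha<1$, so that all the $\mathrm{PD}(\alpha,\cdot)$, $\mathrm{PD}(\beta,\cdot)$, $\mathrm{GEM}$, and $\mathrm{ML}$ laws involved are well defined, and that the Beta parameters $i\alpha-\beta>0$ and $(i\alpha-\beta)/\alpha>0$ are positive for every $i\ge 1$.
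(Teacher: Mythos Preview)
Your proposal is correct and follows essentially the same route as the paper. The paper does not give a separate proof environment for this proposition; its argument is the paragraph immediately preceding the statement, which says exactly what you do: apply $\mathrm{FRAG}_{\alpha,-\beta}$ to $\mathrm{PD}(\beta,\ell\alpha-\beta)$ and $\mathrm{PD}(\beta,\ell-\beta)$ via Pitman's duality~(\ref{pitdual}), invoke Proposition~\ref{propdiversityrep} for the $\alpha$-diversity representation, and then substitute the size-biased decomposition~(\ref{sizebias}) at $\theta=-\beta$. Your additional care with the independence bookkeeping and the expectation identities fills in details the paper leaves implicit.
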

For clarity, for $(\tilde{P}_{k,\beta})$ in~
Proposition~\ref{fixedpoint}, one has $(\tilde{P}_{k,\beta})\overset{d}=(R_{k}\prod_{j=1}^{k-1}(1-R_{j}), k\ge 1),$ where in the case of $(\tilde{P}_{k,\beta})\sim \mathrm{GEM}(\beta,{\ell\alpha-\beta}),$ the $((R_{k}\overset{ind}
\sim \mathrm{Beta}(1-\beta,\ell\alpha+(k-1)\beta))$,
and for $(\tilde{P}_{k,\beta})\sim \mathrm{GEM}(\beta,{\ell-\beta}),$ the $((R_{k}\overset{ind}
\sim \mathrm{Beta}(1-\beta,\ell+(k-1)\beta))$, for all $0\leq \beta<1$. When $\beta=0,$ one has special cases corresponding to the fragmentation results $V_{\alpha,\ell\alpha}=\mathrm{FRAG}_{\alpha,0}(V_{0,\ell\alpha}),$ and $V_{\alpha,\ell}=\mathrm{FRAG}_{\alpha,0}(V_{0,\ell}),$  see~\cite[Chapter 5, p. 119]{Pit06}.
\subsection{$\mathrm{PD}(\alpha,-\frac12)$ Fragmentation of a Brownian excursion partition conditioned on its local time}\label{sec:45brownfrag}
Following Pitman~\cite[Section~8]{Pit03} and~\cite[Section~4.5, p.90]{Pit06}, let $(P_{\ell,0})\sim \mathrm{PD}\big(\frac12,0\big)$ denote the ranked excursion lengths of a standard Brownian motion $B:=(B_{t}:t\in [0,1])$, with corresponding local time at $0$ up till time $1$ given by $L_{1}\overset{d}=\big(2T_{\frac12}\big)^{-\frac12} \overset{d}=|B_{1}|.$ 
Then, it follows that $(P_{\ell,0})|L_{1}=\lambda$ has a $\mathrm{PD}(\frac{1}{2}|\frac{1}{2}\lambda^{-2})$ distribution. Furthermore, with respect to $(P_{\ell}(s))\sim\mathrm{PD}(\frac{1}{2}|\frac{1}{2}s^{-2}),$ we describe the special $\beta=\frac12$ explicit case of the Gibbs partitions (EPPF) of $[n]$ in terms of Hermite functions as derived in~\cite{Pit03}, see also~\cite[Section~4.5]{Pit06}, as
\begin{equation}
p_{\frac12}\left(n_{1},\ldots,n_{k}\left|\frac{1}{2}s^{-2}\right.\right)= s^{k-1}\tilde{H}_{k+1-2n}(s)\frac{\Gamma(n)}{2^{1-n}\Gamma(k)}
p_{\frac{1}{2}}(n_{1},\ldots,n_{k}),
\label{hermiteEPPF}
\end{equation}
where, for $U(a,b,c)$ a confluent hypergeometric function of the
second\break kind~(see~\cite[p.263]{Lebedev72}),
$$
\tilde{H}_{-2q}(s) = 2^{-q} U\left(q,
\frac{1}{2}, \frac{s^2}{2}\right)=\sum_{\ell=0}^{\infty}\frac{{(-s)}^{\ell}}{\ell!}
\frac{\Gamma(q+\frac{\ell}{2})}{2\Gamma(2q)}
2^{q+\frac{\ell}{2}}
$$
is a Hermite function of index $-2q.$ That is, to say 
\begin{equation}
\mathbb{G}^{(n,k)}_{\frac12}\bigg(\frac{1}{2}s^{-2}\bigg)=2^{n-k}s^{k-1}\tilde{H}_{k+1-2n}(s).
\end{equation}

\begin{prop}\label{PropHermitealpha} 
Suppose that $P_{1/2}(s):=(P_{\ell}(s))\sim \mathrm{PD}(\frac{1}{2}|\frac{1}{2}s^{-2}).$ Then, for $\alpha>1/2,$
$$
P_{\alpha}(s)=\mathrm{FRAG}_{\alpha,-\frac{1}{2}}(P_{1/2}(s))\sim \mathrm{PD}_{\alpha|\frac{1}{2}}\left(\alpha\left|\frac{1}{2}s^{-2}\right.\right),
$$
with corresponding $\mathrm{EPPF}$ expressed in terms of a mixture of Hermite functions,
\begin{equation}
\left[\sum_{j=1}^{k}\mathbb{P}^{(k)}_{\frac{1}{2\alpha},0}(j)2^{n-1}s^{j-1}\tilde{H}_{j+1-2n}(s)\frac{\Gamma(n)}{\Gamma(j)}\right]p_{\alpha}(n_{1},\ldots,n_{k}).
\label{MixedalphahermiteEPPF}
\end{equation}
\end{prop}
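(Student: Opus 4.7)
The plan is to recognize Proposition~\ref{PropHermitealpha} as a direct specialization of Theorem~\ref{TheoremGibbsparitionFrag} to the case $\beta = 1/2$, using the fact that the $\mathrm{PD}(\tfrac{1}{2}|y)$ Gibbs $\mathrm{EPPF}$ admits the explicit Hermite function closed form recalled in (\ref{hermiteEPPF}). First, I would invoke Theorem~\ref{fragtheorem}(iii): since $P_{1/2}(s)\sim \mathrm{PD}(\tfrac{1}{2}|\tfrac{1}{2}s^{-2})$ is the law of $V_{1/2}$ conditioned on its inverse local time $T_{1/2} = \tfrac{1}{2}s^{-2}$, the fragmentation $\mathrm{FRAG}_{\alpha,-1/2}(P_{1/2}(s))$ has conditional law $\mathrm{PD}_{\alpha|1/2}(\alpha|\tfrac{1}{2}s^{-2})$, establishing the first assertion of the proposition. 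The restriction $\alpha > 1/2$ is just the requirement $0<\beta<\alpha<1$ with $\beta=1/2$.

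The remaining task is to compute the associated $\mathrm{EPPF}$. Instantiating formula (\ref{Gibbsalpha|delta}) of Theorem~\ref{TheoremGibbsparitionFrag} with $\beta = 1/2$ and $y = \tfrac{1}{2}s^{-2}$ gives
\[
p_{\alpha|1/2}(n_1,\ldots,n_k|\tfrac{1}{2}s^{-2}) = \left[\sum_{j=1}^k \mathbb{P}^{(k)}_{1/(2\alpha),0}(j)\,\frac{f^{(n-j/2)}_{1/2,j/2}(\tfrac{1}{2}s^{-2})}{f_{1/2}(\tfrac{1}{2}s^{-2})}\right] p_\alpha(n_1,\ldots,n_k).
\]
The key computational step is to identify each ratio $f^{(n-j/2)}_{1/2,j/2}(\tfrac{1}{2}s^{-2})/f_{1/2}(\tfrac{1}{2}s^{-2})$ in closed form. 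Comparing the general Gibbs identity (\ref{GibbsalphadeltaEPPF}), which expresses this ratio as the $\mathrm{PD}(1/2|\tfrac{1}{2}s^{-2})$ $\mathrm{EPPF}$ divided by $p_{1/2}(n_1,\ldots,n_k)$, with the Hermite form (\ref{hermiteEPPF}) of the same $\mathrm{EPPF}$ (with $k$ replaced by $j$), one reads off
\[
\frac{f^{(n-j/2)}_{1/2,j/2}(\tfrac{1}{2}s^{-2})}{f_{1/2}(\tfrac{1}{2}s^{-2})} \;=\; 2^{n-1}\, s^{j-1}\,\tilde{H}_{j+1-2n}(s)\,\frac{\Gamma(n)}{\Gamma(j)}.
\]
Substituting this back into the sum produces precisely (\ref{MixedalphahermiteEPPF}).

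There is no substantive obstacle here beyond bookkeeping of the prefactors $2^{1-k}$ versus $(1/2)^{1-k}$ that appear when juggling the two equivalent representations of the $\beta = 1/2$ Gibbs $\mathrm{EPPF}$. The proposition is essentially a translation of the general fragmentation calculation of Theorem~\ref{TheoremGibbsparitionFrag} into the explicit Hermite function language that becomes available in the Brownian setting where $\mathbb{G}^{(n,k)}_{1/2}$ has a tractable form.
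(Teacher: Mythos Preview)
Your proposal is correct and follows exactly the paper's own approach: the paper's proof is a one-line statement that the result follows as a special case of Theorem~\ref{fragtheorem} and Theorem~\ref{TheoremGibbsparitionFrag} together with the explicit Hermite form~(\ref{hermiteEPPF}), and you have simply unpacked those substitutions in detail. Your identification of the density ratio via comparison of~(\ref{GibbsalphadeltaEPPF}) and~(\ref{hermiteEPPF}) is exactly the intended bookkeeping.
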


\begin{proof}The result follows as a special case of Theorem~\ref{fragtheorem} and Theorem~\ref{TheoremGibbsparitionFrag}, and otherwise using the explicit form of the EPPF in~(\ref{hermiteEPPF}). 
\end{proof}

\begin{rem}In order to obtain a partition of $[n]$ corresponding to the EPPF in~(\ref{MixedalphahermiteEPPF}), one can sample from~(\ref{hermiteEPPF}) via the prediction rules indicated in~\cite[eqs.~(111) and (112)]{Pit03}, or otherwise employ the scheme described in Section~\ref{sec:43sampling}.
\end{rem}

Now recall from~\cite[Proposition 14]{Pit03}, equivalently~\cite[Corollary 5]{AldousPit}, that $(\tilde{P}_{\ell}(s)),$ the size biased re-arrangement of $(P_{\ell}(s))\sim\mathrm{PD}(\frac{1}{2}|\frac{1}{2}s^{-2}),$ has  a version with the explicit representation for each $j\ge 1,$ jointly and pointwise,
\begin{equation}
\tilde{P}_{j}(s)=\frac{s^{2}}{s^{2}+S_{j-1}}-\frac{s^{2}}{s^{2}+S_{j}}
\label{browniansize}
\end{equation}
for $S_{j}:=\sum_{i=1}^{j}\tilde{X}_{i}$, where $\tilde{X}_{i}$ are independent and identically distributed variables like $B^{2}_{1}\sim\chi^{2}_{1}$ for $B_{1}\sim \mathrm{N}(0,1)$ a standard Gaussian variable. Now, in view of the results and discussions in Sections~\ref{sec:43sampling} and \ref{sec:44PKhtilde}, we easily obtain the following interesting result.
\begin{prop}Let 
$\hat{K}_{n}(s^{-2}/2)\overset{d}=\sum_{i=1}^{K_{n}(s^{-2}/2)}K^{(i)}_{|C_{i}|}$
denote the number of blocks in a $\mathrm{PD}_{\alpha|\frac{1}{2}}(\alpha|\frac{1}{2}s^{-2})$ partition of $[n],$ as can otherwise be generated according to the scheme in~Section~\ref{sec:43sampling}, by using a $\mathrm{PD}(\alpha,-1/2)$ fragmentation of a $\mathrm{PD}(\frac{1}{2}|\frac{1}{2}s^{-2})$ partition of $[n]$ with blocks $\{C_{i}, i\in [K_{n}(s^{-2}/2)]\}.$ $\hat{K}_{n}(s^{-2}/2)$ is as otherwise described in Corollary~\ref{condblocks} with corresponding $\alpha$-diversity $\hat{Z}_{\alpha|1/2}(s^{-2}/2).$ Then, for $(\tilde{P}_{j}(s),j\ge 1)$ as in~(\ref{browniansize}),
$$
\hat{Z}_{\alpha|1/2}\bigg(\frac{1}{2}s^{-2}\bigg)\overset{d}=\sum_{j=1}^{\infty}[\tilde{P}_{j}(s)]^{\alpha}Z^{(j)}_{\alpha,-1/2}.
$$
Furthermore, as $n\rightarrow\infty,$
$$
\left(\frac{|C_{j}|}{n},\frac{K^{(j)}_{|C_{j}|}}{n^{\alpha}}\right)\overset{d}\rightarrow\left(\tilde{P}_{j}(s),[\tilde{P}_{j}(s)]^{\alpha}Z^{(j)}_{\alpha,-1/2}\right)
$$
jointly for $j=1,2,\ldots,$ where
$\hat{Z}_{\alpha|1/2}(\frac{1}{2}s^{-2})$ is equivalent in distribution to that of $L_{1,\alpha}|L_{1,1/2}=\sqrt{2}s$, and $L_{1,1/2}=L_{\frac{1}{2\alpha}}(L_{1,\alpha}).$
\end{prop}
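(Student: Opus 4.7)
The plan is to recognize the proposition as a direct specialization of Proposition~\ref{propdiversityrep} and of the joint convergence~(\ref{cdPartition}) to the Brownian excursion setting, with the generic size-biased rearrangement replaced by the explicit Aldous--Pitman representation~(\ref{browniansize}).

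First I would establish the correspondence between parameterizations. The distribution $\mathrm{PD}(\frac{1}{2}|\frac{1}{2}s^{-2})$ is the law of $V_{1/2}\sim\mathrm{PD}(1/2,0)$ conditioned on $T_{1/2}=\frac{1}{2}s^{-2}$; via the scaling identity $L_{1,1/2}\overset{d}=T_{1/2}^{-1/2}$ from~(\ref{scaling}), this is equivalent to conditioning on $L_{1,1/2}=\sqrt{2}s$. Theorem~\ref{fragtheorem}(iii), applied with $\beta=1/2$ and $y=\sqrt{2}s$, then confirms that $\mathrm{FRAG}_{\alpha,-1/2}$ of this conditioned mass partition produces a $\mathrm{PD}_{\alpha|\frac{1}{2}}(\alpha|\frac{1}{2}s^{-2})$ mass partition, and Corollary~\ref{condblocks} identifies the limiting $\alpha$-diversity $\hat{Z}_{\alpha|1/2}(\frac{1}{2}s^{-2})$ with the law of $L_{1,\alpha}$ given $L_{1,1/2}=\sqrt{2}s$, which is precisely the final claim of the statement.

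Next I would apply the two-stage sampling scheme in Steps~1--4 of Section~\ref{sec:43sampling} with $\beta=1/2$, drawing the Step~2 partition from the EPPF~(\ref{hermiteEPPF}). This furnishes the blocks $\{C_{i},i\in[K_{n}(s^{-2}/2)]\}$ realizing the decomposition $\hat{K}_{n}(s^{-2}/2)=\sum_{i=1}^{K_{n}(s^{-2}/2)}K^{(i)}_{|C_{i}|}$. Specializing the joint convergence~(\ref{cdPartition}) to this setting gives
$(|C_{j}|/n,K^{(j)}_{|C_{j}|}/n^{\alpha})\overset{d}\rightarrow(\tilde{P}_{j,1/2},\tilde{P}_{j,1/2}^{\alpha}Z^{(j)}_{\alpha,-1/2})$
jointly in $j$, where $(\tilde{P}_{j,1/2})$ denotes the size-biased rearrangement of $(P_{\ell}(s))\sim\mathrm{PD}(\frac{1}{2}|\frac{1}{2}s^{-2})$. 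Substituting the explicit Aldous--Pitman representation $\tilde{P}_{j,1/2}\overset{d}=\tilde{P}_{j}(s)$ from~(\ref{browniansize}) yields the second displayed convergence verbatim, and summing the masses as in Proposition~\ref{propdiversityrep} produces the asserted distributional identity for $\hat{Z}_{\alpha|1/2}(\frac{1}{2}s^{-2})$.

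I do not expect a substantial obstacle: every ingredient---the sampling scheme, the convergence~(\ref{cdPartition}), Proposition~\ref{propdiversityrep}, and the Aldous--Pitman formula~(\ref{browniansize})---is already in place. The only point requiring care is the bookkeeping between the parameterizations $T_{1/2}=\frac{1}{2}s^{-2}$, $L_{1,1/2}=\sqrt{2}s$, and the $y=L_{1,V}^{-1/\beta}$-parameterization used in Theorem~\ref{fragtheorem}, which the above scaling identity resolves cleanly.
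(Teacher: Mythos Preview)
Your proposal is correct and matches the paper's (implicit) approach: the paper gives no separate proof, stating only that the result follows ``in view of the results and discussions in Sections~\ref{sec:43sampling} and~\ref{sec:44PKhtilde}'', which is exactly your specialization of~(\ref{cdPartition}) and Proposition~\ref{propdiversityrep} together with the explicit representation~(\ref{browniansize}). One trivial slip: in Theorem~\ref{fragtheorem}(iii) the symbol $y$ denotes $L_{1,V}^{-1/\beta}=\tfrac{1}{2}s^{-2}$, not $\sqrt{2}s$---but your bookkeeping paragraph already has the correspondence right.
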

\subsection{$\mathrm{FRAG}_{\alpha,-\beta}$ for the Mittag-Leffler class}\label{ss:46Mittagsection}
We now present results for an application of the 
$\mathrm{FRAG}_{\alpha,-\beta}$ operator to the most basic case
of the Mittag-Leffler class as described in~\cite{HJL2}, see also~\cite{JamesLamp}. Recall that  for $\lambda>0,$ the Laplace transform of $L_{1,\beta}\overset{d}=T^{-\beta}_{\beta}$  equates with the
Mittag-Leffler function, see for instance~\cite{GorenfloMittag},
expressed as,

$$\mathrm{E}_{\beta,1}(-\lambda)=\mathbb{E}\big[{\mbox
e}^{-\lambda
T^{-\beta}_{\beta}}\big]=\sum_{\ell=0}^{\infty}\frac{{(-\lambda)}^{\ell}}{\Gamma(\beta\ell+1)}.
$$
Let $(N(s),s\ge 0)$ denote a standard Poisson process, where $\mathbb{E}[N(s)]=s,$ and consider the mixed Poisson process $(N(t L_{1,\beta}), t\ge 0).$ Then, as shown in~\cite{HJL2}, for $V_{\beta}\sim \mathrm{PD}(\beta,0),$ $V_{\beta}|N(\lambda L_{1,\beta})=0$ corresponds in distribution to 
\begin{equation}
V_{\beta}(\lambda)\sim\int_{0}^{\infty}\mathrm{PD}(\beta|t)\frac{{\mbox e}^{-\lambda t^{-\beta}}}{\mathrm{E}_{\beta,1}(-\lambda)}f_{\beta}(t)dt
\label{MittagLefflerclass},
\end{equation}
that is, to say a stable Poisson-Kingman distribution with index $\beta$ and $h(t)={\mbox e}^{-\lambda t^{-\beta}}/\mathrm{E}_{\beta,1}(-\lambda).$

\begin{prop}Let $V_{\beta}(\lambda):=(V_{k,\beta}(\lambda))$ have distribution specified in~(\ref{MittagLefflerclass}), and otherwise consider the setting in Theorem~\ref{fragtheorem}. 
\begin{enumerate}
\item[(i)] $\tilde{V}_{\alpha}(\lambda)=\mathrm{FRAG}_{\alpha,-\beta}(V_{\beta}(\lambda))\sim \mathrm{PK}_{\alpha}\left(\tilde{h}_{\frac{\beta}{\alpha}}\cdot f_{\alpha}\right)$ where
$$
\tilde{h}_{\frac{\beta}{\alpha}}(v)
=\frac{\mathrm{E}_{\frac{\beta}{\alpha},1}(-\lambda v^{-\beta})}{\mathrm{E}_{\beta,1}(-\lambda)}.
$$
\item[(ii)] Let $L_{1,\alpha}(\lambda)\overset{d}=T^{-\alpha}_{\alpha}(\lambda)$ denote the corresponding local time/$\alpha$-diversity with density ${\mathrm{E}_{\frac{\beta}{\alpha},1}(-\lambda s^{\frac{\beta}{\alpha}})}g_{\alpha}(s)/{\mathrm{E}_{\beta,1}(-\lambda)},$ then
$$
L_{1,\alpha}(\lambda)\overset{d}=\sum_{j=1}^{\infty}[V_{j,\beta}(\lambda)]^{\alpha} Z^{(j)}_{\alpha,-\beta},
$$
where $((Z^{(j)}_{\alpha,-\beta}))\overset{iid}\sim \mathrm{ML}(\alpha,-\beta),$ independent of $V_{\beta}(\lambda)$.
\end{enumerate}
\end{prop}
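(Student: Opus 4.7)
The plan is to treat this as a clean specialization of Theorem~\ref{fragtheorem}(i) and Proposition~\ref{propdiversityrep} to the explicit choice $h(t) = e^{-\lambda t^{-\beta}}/\mathrm{E}_{\beta,1}(-\lambda)$. The only real content is identifying the conditional expectation appearing in $\tilde h_{\beta/\alpha}$ as a Mittag-Leffler function, so I would organize the argument around that single observation.

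For part (i), I would start directly from Theorem~\ref{fragtheorem}(i), which gives
\[
\tilde h_{\beta/\alpha}(v) = \mathbb{E}_{\beta/\alpha}\!\left[h\bigl(v T_{\beta/\alpha}^{1/\alpha}\bigr)\right]
= \frac{1}{\mathrm{E}_{\beta,1}(-\lambda)}\,
\mathbb{E}_{\beta/\alpha}\!\left[\exp\!\bigl(-\lambda v^{-\beta} T_{\beta/\alpha}^{-\beta/\alpha}\bigr)\right],
\]
using that $(v T_{\beta/\alpha}^{1/\alpha})^{-\beta} = v^{-\beta} T_{\beta/\alpha}^{-\beta/\alpha}$. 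The key step is to recognize $T_{\beta/\alpha}^{-\beta/\alpha} \overset{d}{=} L_{1,\beta/\alpha} \sim \mathrm{ML}(\beta/\alpha,0)$, whose Laplace transform is precisely the Mittag-Leffler function $\mathrm{E}_{\beta/\alpha,1}(-\mu)$. Applying this with $\mu = \lambda v^{-\beta}$ yields the claimed ratio formula, and the $\mathrm{PK}_\alpha(\tilde h_{\beta/\alpha}\cdot f_\alpha)$ conclusion is then immediate from Theorem~\ref{fragtheorem}(i).

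For part (ii), the stated density for $L_{1,\alpha}(\lambda) \overset{d}{=} T_\alpha^{-\alpha}(\lambda)$ follows at once from part (i) together with the general recipe $\tilde h_{\beta/\alpha}(s^{-1/\alpha}) g_\alpha(s)$ for the local-time density of a $\mathrm{PK}_\alpha(\tilde h_{\beta/\alpha}\cdot f_\alpha)$ law: substituting $v = s^{-1/\alpha}$ gives $v^{-\beta} = s^{\beta/\alpha}$, producing the ratio $\mathrm{E}_{\beta/\alpha,1}(-\lambda s^{\beta/\alpha})/\mathrm{E}_{\beta,1}(-\lambda)$ displayed in the statement. For the distributional identity, I would directly invoke Proposition~\ref{propdiversityrep} with $P_\beta = V_\beta(\lambda) \sim \mathrm{PK}_\beta(h\cdot f_\beta)$: its $\alpha$-diversity coincides with the $\alpha$-diversity of $\tilde V_\alpha(\lambda) = \mathrm{FRAG}_{\alpha,-\beta}(V_\beta(\lambda))$, which is $L_{1,\alpha}(\lambda)$, and the proposition supplies the series representation $\sum_{j\ge 1}[V_{j,\beta}(\lambda)]^\alpha Z^{(j)}_{\alpha,-\beta}$ with the $Z^{(j)}_{\alpha,-\beta}$ iid $\mathrm{ML}(\alpha,-\beta)$ independent of $V_\beta(\lambda)$.

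There is essentially no obstacle beyond the recognition that the Laplace transform of $L_{1,\beta/\alpha}$ is the Mittag-Leffler function; the remaining work is bookkeeping. If one wanted an independent consistency check, one could verify that integrating the stated density of $L_{1,\alpha}(\lambda)$ reproduces $1$ by expanding $\mathrm{E}_{\beta/\alpha,1}(-\lambda s^{\beta/\alpha})$ as a power series and using the moment formula $\mathbb{E}[T_\alpha^{-\beta\ell}] = \Gamma(\beta\ell+1)/\Gamma(\beta\ell/\alpha+1) \cdot \mathbb{E}[T_{\beta/\alpha}^{-\beta\ell/\alpha}]$ style manipulations implied by the scaling identity, which collapses back to $\mathrm{E}_{\beta,1}(-\lambda)$; this is a useful sanity check but not strictly needed for the proof.
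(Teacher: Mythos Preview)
Your proposal is correct and follows exactly the approach implied by the paper, which states this proposition without a separate proof: it is a direct specialization of Theorem~\ref{fragtheorem}(i) and Proposition~\ref{propdiversityrep} to $h(t)=e^{-\lambda t^{-\beta}}/\mathrm{E}_{\beta,1}(-\lambda)$, with the only substantive identification being that the Laplace transform of $T_{\beta/\alpha}^{-\beta/\alpha}\sim\mathrm{ML}(\beta/\alpha,0)$ is $\mathrm{E}_{\beta/\alpha,1}(-\cdot)$, precisely as you say.
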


\section{Duals in the generalized Mittag-Leffler class}\label{sec:5dualMittagsection}
In Section~\ref{ss:46Mittagsection}, we examined the fragmentation of the Mittag-Leffler variable, $V_{\beta}(\lambda)$ having distribution equivalent to $V_{\beta}|N(\lambda L_{1,\beta})=0.$ The extension to more generalized Mittag-Leffler classes could proceed as in~\cite{HJL2}, by directly conditioning on $V_{\beta}$ as  $V_{\beta}|N(\lambda L_{1,\beta})=j,$ for fixed $j=0,1,2,\ldots.$ Here we show that the corresponding $\mathrm{P}^{\frac{\beta}{\alpha}}_{\alpha}(h)$ joint distribution can arise directly from $(V_{\alpha},Q_{\frac{\beta}{\alpha}})|N(\lambda L_{1,\beta})=j,$ which, has not been previously studied. In order to describe this, we first recall the Laplace transform of the $(P_{\ell})\sim\mathrm{PD}(\beta,\theta)$ local time at $1$ variable, or $\beta$-diversity, say $\hat{L}_{\beta,\theta}\overset{d}=T^{-\beta}_{\beta,\theta}\sim \mathrm{ML}(\beta,\theta),$ with density $g_{\beta,\theta}(s)=s^{\theta/\beta}g_{\beta}(s)/\mathbb{E}[T^{-\theta}_{\beta}],$ as given in \cite[Section~3]{JamesLamp}, and \cite[Section 4.1]{HJL2},

\begin{equation}
\mathbb{E}\big[{\mbox
e}^{-\lambda T^{-\beta}_{\beta,\theta}}\big]=\mathbb{E}\big[{\mbox
e}^{-\lambda^{1/\beta}X_{\beta,\theta}}\big]=\mathrm{E}^{(\frac{\theta}{\beta}+1)}_{\beta,\theta+1}(-\lambda),
\label{MittagLeffler}
\end{equation}
where $X_{\beta,\theta}:=T_{\beta}/T_{\beta,\theta}$ is the Lamperti variable studied in~\cite{JamesLamp}, and
\begin{equation}
\mathrm{E}^{(\frac{\theta}{\beta}+1)}_{\beta,\theta+1}(-\lambda)=\sum_{\ell=0}^{\infty}\frac{{(-\lambda)}^{\ell}}{\ell!}\frac{
\Gamma(\frac{\theta}{\beta}+1+\ell)\Gamma(\theta+1)}
{\Gamma(\frac{\theta}{\beta}+1)\Gamma(\beta
\ell+\theta+1)},\qquad
\theta>-\beta,
\label{altM}
\end{equation}
and from~\cite[Proposition 4.4]{HJL2} there is the density
$$
g^{(0)}_{\beta,\theta+m\beta}(s|\lambda)=
\mathbb{P}(\hat{L}_{\beta,\theta}\in ds|N(\lambda \hat{L}_{\beta,\theta})=j)/ds=\frac{{\mbox e}^{-\lambda s}g_{\beta,\theta+j\beta}(s)}{\mathrm{E}^{(\frac{\theta}{\beta}+j+1)}_{\beta,\theta+j\beta+1}(-\lambda)}.
$$
It follows that for $(P_{\ell})\sim \mathrm{PD}(\beta,\theta),$
 $(P_{\ell})|N(\lambda \hat{L}_{\beta,\theta})=j\sim \mathbb{L}^{(0)}_{\beta,\theta+j\beta}(\lambda),$ such that,
\begin{equation}
\mathbb{L}^{(0)}_{\beta,\theta+j\beta}(\lambda):=\int_{0}^{\infty}\mathrm{PD}(\beta|s^{-\frac1\beta})\,
g^{(0)}_{\beta,\theta+j\beta}(s|\lambda)\,ds,
\label{genMittaglambda2}
\end{equation}
and, hence, in this case,
$$
h(t):=\vartheta^{(\lambda)}_{\beta,\theta+j\beta}(t)=\frac{{\mbox e}^{-\lambda t^{-\beta}}t^{-\theta-j\beta}}{\mathrm{E}^{(\frac{\theta}{\beta}+j+1)}_{\beta,\theta+j\beta+1}(-\lambda)\mathbb{E}[T_{\beta}^{-\theta-j\beta}]}.
$$
Setting $\theta=0,$ it follows for the variables $(V_{\alpha},Q_{\frac{\beta}{\alpha}}),$ and $V_{\beta}$ defined according to coag/frag duality in~\cite{BerPit2000, Pit99Coag}, that the distribution of  
$(V_{\alpha},Q_{\frac{\beta}{\alpha}})|N(\lambda L_{1,\beta})=j\sim\mathrm{P}^{\frac{\beta}{\alpha}}_{\alpha}(\vartheta^{(\lambda)}_{\beta,j\beta}),$ which can be expressed
$$
\int_{0}^{\infty}\int_{0}^{\infty}\mathrm{PD}(\alpha|s)\mathrm{PD}({\beta}/
{\alpha}|y)
\frac{{\mbox e}^{-\lambda s^{-\beta}y^{\frac{-\beta}{\alpha}}}s^{-j\beta}y^{-\frac{j\beta}{\alpha}}}
{\mathrm{E}^{(j+1)}_{\beta,j\beta+1}(-\lambda)\mathbb{E}[T_{\beta}^{-j\beta}]}
f_{\frac{\beta}{\alpha}}(y)f_{\alpha}(s)dyds,
$$
equivalently,
$$
\int_{0}^{\infty}\int_{0}^{\infty}\mathrm{PD}(\alpha|s^{-\frac{1}{\alpha}})\mathrm{PD}({\beta}/
{\alpha}|y^{-\frac{\alpha}{\beta}})
\frac{{\mbox e}^{-\lambda s^{\frac{\beta}{\alpha}}y}}
{\mathrm{E}^{(j+1)}_{\beta,j\beta+1}(-\lambda)}
g_{\frac{\beta}{\alpha},\frac{j\beta}{\alpha}}(y)g_{\alpha,j\beta}(s)dyds.
$$
It follows that
\begin{equation}
\mathbb{E}\big[{\mbox
e}^{-\lambda y^{-\frac{\beta}{\alpha}}T^{-\beta}_{\alpha,j\beta}}\big]=\sum_{\ell=0}^{\infty}\frac{{(-\lambda y^{-\frac{\beta}{\alpha}})}^{\ell}}{\ell!}\frac{
\Gamma(\frac{\beta(j+\ell)}{\alpha})\Gamma(j\beta)}
{\Gamma(\frac{j\beta}{\alpha})\Gamma(j\beta+\ell\beta)}.
\label{partialMittag}
\end{equation}
Hence a direct application of Proposition~\ref{propcoag} leads to identification of all relevant laws as described in the next result. 

\begin{prop}
Consider the specifications in Proposition~\ref{propcoag} with $(\tilde{V},Q)\sim\mathrm{P}^{\frac{\beta}{\alpha}}_{\alpha}(\vartheta^{(\lambda)}_{\beta,j\beta}),$ for each $j=0,1,2\ldots$. Then, in this case,
\begin{enumerate}
\item[(i)]$V\sim \mathbb{L}^{(0)}_{\beta,j\beta}(\lambda)$ as in~(\ref{genMittaglambda2}) with $\theta=0.$
\item[(ii)]$Q$ has the marginal distribution
$$
\int_{0}^{\infty}\mathrm{PD}(\beta/\alpha|y)\frac{\mathbb{E}\big[{\mbox
e}^{-\lambda y^{-\frac{\beta}{\alpha}}T^{-\beta}_{\alpha,j\beta}}\big]}{{\mathrm{E}^{(j+1)}_{\beta,j\beta+1}(-\lambda)}}f_{\frac{\beta}{\alpha},\frac{j\beta}{\alpha}}(y)dy
$$
where $\hat{T}_{1}$ has density ${\mathbb{E}\big[{\mbox
e}^{-\lambda y^{-\frac{\beta}{\alpha}}T^{-\beta}_{\alpha,j\beta}}\big]}f_{\frac{\beta}{\alpha},\frac{j\beta}{\alpha}}(y)/{{\mathrm{E}^{(j+1)}_{\beta,j\beta+1}(-\lambda)}},$ which may be expressed in terms of~(\ref{partialMittag}).
\item[(iii)]$\tilde{V}|\hat{T}_{1}=y,$ has a stable Poisson-Kingman distribution with index $\alpha$ and mixing density
$$
\frac{{\mbox e}^{-\lambda s^{-\beta}y^{\frac{-\beta}{\alpha}}}}
{\mathbb{E}\big[{\mbox
e}^{-\lambda y^{-\frac{\beta}{\alpha}}T^{-\beta}_{\alpha,j\beta}}\big]}f_{\alpha,j\beta}(s).
$$
\item[(iv)]$\tilde{V}=\mathrm{FRAG}_{\alpha,-\beta}(V)$ has a stable Poisson-Kingman distribution with index $\alpha$ and mixing density
$$
\frac{{\mathrm{E}^{(j+1)}_{\frac{\beta}{\alpha},\frac{j\beta}{\alpha}+1}(-\lambda s^{-\beta})}}
{{\mathrm{E}^{(j+1)}_{\beta,j\beta+1}(-\lambda)}}f_{\alpha,j\beta}(s).
$$
\end{enumerate}
\end{prop}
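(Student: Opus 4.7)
The plan is to apply Proposition~\ref{propcoag} directly with the specific choice $h(t) = \vartheta^{(\lambda)}_{\beta,j\beta}(t)$ spelled out just before the statement, and then identify each of the resulting functions $\tilde h_{\beta/\alpha}$, $\hat h_\alpha$, $h^{(y)}_\alpha$ in closed form by recognizing them as Laplace transforms of appropriately tilted stable or generalized Mittag-Leffler variables. Part~(i) is then immediate: Proposition~\ref{propcoag}(i) gives $V\sim \mathrm{PK}_\beta(\vartheta^{(\lambda)}_{\beta,j\beta}\cdot f_\beta)$, and comparing with the defining formula~(\ref{genMittaglambda2}) at $\theta=0$ shows $V\sim \mathbb{L}^{(0)}_{\beta,j\beta}(\lambda)$.

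For part~(ii) I would compute $\hat h_\alpha(y)=\mathbb{E}_\alpha[\vartheta^{(\lambda)}_{\beta,j\beta}(T_\alpha y^{1/\alpha})]$ by pulling the $y$-dependence outside the expectation and then rewriting the remaining factor $\mathbb{E}[e^{-\lambda y^{-\beta/\alpha}T_\alpha^{-\beta}}T_\alpha^{-j\beta}]$ as $\mathbb{E}[T_\alpha^{-j\beta}]\cdot\mathbb{E}[e^{-\lambda y^{-\beta/\alpha}T_{\alpha,j\beta}^{-\beta}}]$ using the tilted density $f_{\alpha,j\beta}(t)=t^{-j\beta}f_\alpha(t)/\mathbb{E}[T_\alpha^{-j\beta}]$. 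To convert the product $\hat h_\alpha(y)f_{\beta/\alpha}(y)$ into the claimed density built from $f_{\beta/\alpha,j\beta/\alpha}$, the required identity is the Mellin factorization
\begin{equation*}
\mathbb{E}[T_\beta^{-j\beta}]=\mathbb{E}[T_\alpha^{-j\beta}]\,\mathbb{E}[T_{\beta/\alpha}^{-j\beta/\alpha}],
\end{equation*}
which is an immediate consequence of the independence identity $T_\beta\overset{d}{=}T_\alpha\times T_{\beta/\alpha}^{1/\alpha}$ recalled in the remark after~(\ref{subcoag}). Part~(iii) is then a short computation: Proposition~\ref{propcoag}(iii) gives $h^{(y)}_\alpha(s)=\vartheta^{(\lambda)}_{\beta,j\beta}(sy^{1/\alpha})/\hat h_\alpha(y)$, and multiplying by $f_\alpha(s)$ and plugging in the formula obtained for $\hat h_\alpha(y)$ yields, after cancellation of the constants $\mathrm{E}^{(j+1)}_{\beta,j\beta+1}(-\lambda)$, $\mathbb{E}[T_\beta^{-j\beta}]$, and the factor $y^{-j\beta/\alpha}$, exactly the displayed mixing density.

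For part~(iv), use Theorem~\ref{fragtheorem}(i) to write $\tilde h_{\beta/\alpha}(s)=\mathbb{E}_{\beta/\alpha}[\vartheta^{(\lambda)}_{\beta,j\beta}(sT_{\beta/\alpha}^{1/\alpha})]$, and repeat the calculation of Step~2 but now with $T_{\beta/\alpha}$ in place of $T_\alpha$. The inner expectation $\mathbb{E}[e^{-\lambda s^{-\beta}T_{\beta/\alpha}^{-\beta/\alpha}}T_{\beta/\alpha}^{-j\beta/\alpha}]$ is, after dividing by $\mathbb{E}[T_{\beta/\alpha}^{-j\beta/\alpha}]$, equal to $\mathbb{E}[e^{-\lambda s^{-\beta}T_{\beta/\alpha,j\beta/\alpha}^{-\beta/\alpha}}]$, which by the identity~(\ref{MittagLeffler}) applied at parameters $(\beta/\alpha,j\beta/\alpha)$ equals $\mathrm{E}^{(j+1)}_{\beta/\alpha,j\beta/\alpha+1}(-\lambda s^{-\beta})$. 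A final application of the same Mellin factorization $\mathbb{E}[T_\beta^{-j\beta}]=\mathbb{E}[T_\alpha^{-j\beta}]\mathbb{E}[T_{\beta/\alpha}^{-j\beta/\alpha}]$ matches the normalizing constants so that $\tilde h_{\beta/\alpha}(s)f_\alpha(s)$ is precisely the stated density against $f_{\alpha,j\beta}$.

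Nothing in the argument is deep; the only step requiring real care is the constant-tracking across (ii)--(iv), where four distinct normalizers---$\mathrm{E}^{(j+1)}_{\beta,j\beta+1}(-\lambda)$, $\mathbb{E}[T_\beta^{-j\beta}]$, $\mathbb{E}[T_\alpha^{-j\beta}]$ and $\mathbb{E}[T_{\beta/\alpha}^{-j\beta/\alpha}]$---must collapse correctly. The Mellin factorization is the single identity that makes all four statements fall out of Proposition~\ref{propcoag}, so I expect the bookkeeping to be the only real obstacle and a clean, uniform notation for the tilted densities $f_{\alpha,j\beta}$ and $f_{\beta/\alpha,j\beta/\alpha}$ will be essential to keep the calculation transparent.
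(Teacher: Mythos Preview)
Your proposal is correct and matches the paper's approach exactly: the paper states just before the proposition that ``a direct application of Proposition~\ref{propcoag} leads to identification of all relevant laws'' and gives no further proof, so your outline simply fills in the bookkeeping (the tilting to $f_{\alpha,j\beta}$ and $f_{\beta/\alpha,j\beta/\alpha}$, the Mellin factorization $\mathbb{E}[T_\beta^{-j\beta}]=\mathbb{E}[T_\alpha^{-j\beta}]\,\mathbb{E}[T_{\beta/\alpha}^{-j\beta/\alpha}]$, and the identification via~(\ref{MittagLeffler})) that the authors leave implicit.
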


\section{Coagulation and fragmentation of generalized gamma models}\label{sec:6CoagFragGGmodels}
For any $0<\beta<1,$ let $(\tau_{\beta}(s), s\ge 0)$ denote a generalized gamma subordinator specified by its Laplace transform $\mathbb{E}[{\mbox e}^{-w\tau_{\beta}(s)}]={\mbox e}^{-s[(1+w)^{\beta}-1]}.$ 
The generalized gamma subordinator and corresponding mass partitions, and bridges defined by normalization, as described in~\cite{Pit03}, arises in numerous contexts. However, for purpose of this exposition, the reader may refer to its role in the construction of $\mathrm{PD}(\beta,\theta)$ distributions as described in~\cite[Proposition 21]{PY97}. 
More generally, similar to the Mittag-Leffler class, let 
$(N(t T_{\beta}(1)), t\ge 0)$ denote a mixed Poisson process. Then, for $V_{\beta}\sim \mathrm{PD}(\beta,0),$ and for $\zeta>0,$ the distribution of $V_{\beta}|N(\zeta^{\frac{1}{\beta}}T_{\beta}(1)))=m$ corresponds to the laws $
\mathbb{P}^{[m]}_{\beta}(\zeta):= \mathrm{PK}_{\beta}(r^{[m]}_{\beta,\zeta}\cdot f_{\beta})$, where 
\begin{equation}
r^{[m]}_{\beta,\zeta}(t)=\frac{t^{m}{\mbox e}^{-\zeta^{\frac{1}{\beta}}t}}{\mathbb{E}[T^{m}_{\beta}{\mbox e}^{-\zeta^{\frac{1}{\beta}}T_{\beta}}]},
\label{biasPDgen}
\end{equation} 
for $m=0,1,2,\ldots$, as described in~\cite{HJL2,JamesStick,JamesFragblock}. Here, we show how to use Proposition~\ref{propcoag} to easily identify laws and explicit constructions of (dependent)~random measures leading to a coag/frag duality in the case of $m=0,1$ and also show how one may recover the Poisson-Dirichlet coag/frag duality results of~\cite{Pit99Coag}, based on independent $\mathrm{PD}(\alpha,\theta)$ and $\mathrm{PD}(\frac{\beta}{\alpha},\frac{\theta}{\alpha})$ distributions, in the case of $\theta>0,$ using $m=0$, and the general case of $\theta>-\beta,$ using $m=1.$ Results for general $m,$ using Proposition~\ref{propcoag}, are also manageable but require too many additional details for the present exposition.

Now, for a fixed value $\zeta,$ define the scaled subordinator $\hat{T}_{\beta}(\zeta v)=\tau_{\beta}(\zeta v)/{\zeta^{\frac{1}{\beta}}}$, for $0\leq v\leq 1,$ such that
$$
\hat{T}_{1,\beta}(\zeta):=\frac{\tau_{\beta}(\zeta)}{\zeta^{\frac{1}{\beta}}}
$$
has density ${\mbox e}^{-\zeta^{\frac{1}{\beta}}t}{\mbox e}^{\zeta}f_{\beta}(t),$ and one may form a normalized general gamma bridge, for $v\in[0,1],$ as 
\begin{equation}
F_{P_{\beta}(\zeta)}(v)=\frac{\hat{T}_{\beta}(\zeta v)}{\hat{T}_{\beta}(\zeta)}=\frac{\tau_{\beta}(\zeta v)}{\tau_{\beta}(\zeta)}=\sum_{k=1}^{\infty}\hat{P}_{\beta,k}(\zeta)\mathbb{I}_{\{U_{k}\leq v\}},
\end{equation}
where $P_{\beta}(\zeta):=(\hat{P}_{\beta,k}(\zeta))\sim \mathbb{P}_{\beta}(\zeta):=\mathrm{PK}_{\beta}(r_{\beta,\zeta}\cdot f_{\beta})$ with $h(t)=r_{\beta,\zeta}(t):=r^{[0]}_{\beta,\zeta}(t) = {\mbox e}^{-\zeta^{\frac{1}{\beta}}t}{\mbox e}^{\zeta},$ and $\hat{T}_{1,\beta}(\zeta)$ equates to its inverse local time at $1.$ Using this and Proposition~\ref{propcoag},  for $V\overset{d}=P_{\beta}(\zeta)\sim \mathbb{P}_{\beta}(\zeta),$ we obtain versions of $(\tilde{V},Q)\sim \mathrm{P}^{\frac{\beta}{\alpha}}_{\alpha}(r_{\beta,\zeta})$ in this case as follows; for $Q,$
$$
h_{\alpha}(y)=\mathbb{E}_{\alpha}\left[r_{\beta,\zeta}(T_{\alpha}y^\frac{1}{\alpha})\right]=r_{\frac{\beta}{\alpha},\zeta}(y)={\mbox e}^{-\zeta^{\frac{\alpha}{\beta}}y}{\mbox e}^{\zeta},
$$
that is, $Q\overset{d}=P_{\frac{\beta}{\alpha}}(\zeta)\sim \mathbb{P}_{\frac{\beta}{\alpha}}(\zeta)$ with inverse local time at $1,$ $\hat{T}_{1}\overset{d}=\hat{T}_{1,\frac{\beta}{\alpha}}(\zeta)$, with density ${\mbox e}^{-\zeta^{\frac{\alpha}{\beta}}y}{\mbox e}^{\zeta}f_{\frac{\beta}{\alpha}}(y).$ Hence, $\tilde{V}|\hat{T}_{1}=y\sim \mathbb{P}_{\alpha}(\zeta^{\frac{\alpha}{\beta}}y),$ and  has a marginal distribution, corresponding to $\tilde{V}=\mathrm{FRAG}_{\alpha,-\beta}(V),$
$$
\tilde{V}\overset{d}=P_{\alpha}(\tau_{\frac{\beta}{\alpha}}(\zeta))\sim \mathbb{E}[\mathbb{P}_{\alpha}(\tau_{\frac{\beta}{\alpha}}(\zeta))]
:=\int_{0}^{\infty}\mathbb{P}_{\alpha}(\zeta^{\frac{\alpha}{\beta}}y){\mbox e}^{-\zeta^{\frac{\alpha}{\beta}}y}{\mbox e}^{\zeta}f_{\frac{\beta}{\alpha}}(y)dy.
$$
Hence, jointly and component-wise, $(\tilde{V},Q)\overset{d}=\left(P_{\alpha}(\tau_{\frac{\beta}{\alpha}}(\zeta)),P_{\frac{\beta}{\alpha}}(\zeta)\right),$ and $V\overset{d}=
P_{{\beta}}(\zeta)$ is determined by the coagulation 
$$
F_{P_{\beta}(\zeta)}(v)=F_{P_{\alpha}(\tau_{\frac{\beta}{\alpha}}(\zeta))}\left(F_{P_{\frac{\beta}{\alpha}}(\zeta)}(v)\right),
$$
where, for clarity,
$$
F_{P_{\alpha}(\tau_{\frac{\beta}{\alpha}}(\zeta))}(v)=
\frac{\tau_{\beta}\left(\tau_{\frac{\beta}{\alpha}}(\zeta) v\right)}{\tau_{\beta}\left(\tau_{\frac{\beta}{\alpha}}(\zeta)\right)}\qquad {\mbox { and  }}\qquad F_{P_{\frac{\beta}{\alpha}}(\zeta)}(v)=\frac{\tau_{\frac{\beta}{\alpha}}(\zeta v)}{\tau_{\frac{\beta}{\alpha}}(\zeta)}.
$$
Conversely, $\tilde{V}=\mathrm{FRAG}_{\alpha,-\beta}(V)$ is equivalent in distribution to $P_{\alpha}(\tau_{\frac{\beta}{\alpha}}(\zeta))=\mathrm{FRAG}_{\alpha,-\beta}(P_{\beta}(\zeta))\sim \mathbb{E}[\mathbb{P}_{\alpha}(\tau_{\frac{\beta}{\alpha}}(\zeta))].$ Now, following~\cite[Proposition 21]{PY97}, for $\theta>0$, it follows that for $\gamma_{a}\sim\mathrm{Gamma}(a,1),$ $P_{\beta}(\gamma_{\frac{\theta}{\beta}})\sim \mathrm{PD}(\beta,\theta)$, and 
$$
P_{\alpha}(\tau_{\frac{\beta}{\alpha}}(\gamma_{\frac{\theta}{\beta}}))\sim \mathrm{PD}(\alpha,\theta), \quad {\mbox {  independent of  }}\quad P_{\frac{\beta}{\alpha}}(\gamma_{\frac{\theta}{\beta}})\sim \mathrm{PD}\left(\frac{\beta}{\alpha},\frac{\theta}{\alpha}\right),
$$
where also $P_{\alpha}(\tau_{\frac{\beta}{\alpha}}(\gamma_{\frac{\theta}{\beta}}))=\mathrm{FRAG}_{\alpha,-\beta}(P_{\beta}(\gamma_{\frac{\theta}{\beta}}))\sim\mathrm{PD}(\alpha,\theta),$
recovering the coag/frag duality in~\cite{Pit99Coag} for $\theta>0.$ 

\begin{rem}See~\cite{JamesCoagGen} for an earlier, less refined, treatment of these results which requires considerably more effort. 
\end{rem}

\subsection{Results for size biased generalized gamma $\mathbb{P}^{[1]}_{\beta}(\zeta)$}
In order to recover the duality for the entire range of $\theta>-\beta$, we now work with the size biased law of a generalized gamma density. Suppose that $V\overset{d}=P^{[1]}_{\beta}(\zeta)\sim\mathbb{P}^{[1]}_{\beta}(\zeta):= \mathrm{PK}_{\beta}\left(r^{[1]}_{\beta,\zeta}\cdot f_{\beta}\right)$, where
\begin{equation}
r^{[1]}_{\beta,\zeta}(t)={\zeta^{\frac{1}{\beta}-1}}t{\mbox e}^{-\zeta^{\frac{1}{\beta}}t}{\mbox e}^{\zeta}/{\beta},
\label{biasPD}
\end{equation} 
and, now  
$$
\hat{T}^{[1]}_{1,\beta}(\zeta):=\frac{\tau_{\beta}\left(\zeta+\gamma_{\frac{1-\beta}{\beta}}\right)}{\zeta^{\frac{1}{\beta}}},
$$
with density $r^{[1]}_{\beta,\zeta}(t)f_{\beta}(t)$, is the corresponding inverse local time at $1.$ Since this case and a derivation for $\theta>-\beta$ is not well known, we apply Proposition~\ref{propcoag} to identify all the relevant distributions in the next result.
\begin{prop}Consider the variables $V$ and $(\tilde{V},Q),$ forming the coagulation and fragmentation operations as described in Proposition~\ref{propcoag}, where $V\overset{d}=P^{[1]}_{\beta}(\zeta)\sim\mathbb{P}^{[1]}_{\beta}(\zeta):= \mathrm{PK}_{\beta}\left(r^{[1]}_{\beta,\zeta}\cdot f_{\beta}\right)$, and thus $(\tilde{V},Q)\sim \mathrm{P}^{\frac{\beta}{\alpha}}_{\alpha}\left(r^{[1]}_{\beta,\zeta}\right).$ Then,
\begin{enumerate}
\item[(i)]$Q\overset{d}=P^{[1]}_{\frac{\beta}{\alpha}}(\zeta)\sim \mathbb{P}^{[1]}_{\frac{\beta}{\alpha}}(\zeta),$ with $\hat{T}_{1}\overset{d}=T^{[1]}_{1,\frac{\beta}{\alpha}}(\zeta)$
\item[(ii)]$\tilde{V}|\hat{T}_{1}=y\sim \mathbb{P}^{[1]}_{\alpha}(\zeta^{\frac{\alpha}{\beta}}y)$
\item[(iii)]$\tilde{V}\overset{d}=P^{[1]}_{\alpha}\left(\tau_{\frac{\beta}{\alpha}}(\zeta+\gamma_{\frac{\alpha-\beta}{\beta}})\right)$
\item[(iv)]$(\tilde{V},Q)\overset{d}=\left(P^{[1]}_{\alpha}\bigg(\tau_{\frac{\beta}{\alpha}}\big(\zeta+\gamma_{\frac{\alpha-\beta}{\beta}}\big)\bigg),P^{[1]}_{\frac{\beta}{\alpha}}(\zeta)\right)$ jointly and component-wise.
\item[(v)]$P^{[1]}_{\alpha}\bigg(\tau_{\frac{\beta}{\alpha}}\big(\zeta+\gamma_{\frac{\alpha-\beta}{\beta}}\big)\bigg)=\mathrm{FRAG}_{\alpha,-\beta}\left(P^{[1]}_{\beta}(\zeta)\right)$
\item[(vi)]$P^{[1]}_{\beta}(\gamma_{\frac{\theta+\beta}{\beta}})\sim \mathrm{PD}(\beta,\theta)$ for $\theta>-\beta$
\item[(vii)]$P^{[1]}_{\alpha}\left(\tau_{\frac{\beta}{\alpha}}\big(\gamma_{\frac{\theta+\beta}{\beta}}+\gamma_{\frac{\alpha-\beta}{\beta}}\big)\right)\sim \mathrm{PD}(\alpha,\theta)$ independent of $P^{[1]}_{\frac{\beta}{\alpha}}(\gamma_{\frac{\theta+\beta}{\beta}})\sim \mathrm{PD}(\frac{\beta}{\alpha},\frac{\theta}{\alpha})$
\end{enumerate}
\end{prop}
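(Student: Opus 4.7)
The plan is to invoke Proposition~\ref{propcoag} with $h(t) = r^{[1]}_{\beta,\zeta}(t)$, compute the three required marginal/conditional densities in closed form using only the derivative of the stable Laplace transform, and then deduce (vi)--(vii) by randomizing $\zeta$ via an independent gamma variable and using additivity of gammas together with the subordinator identity $\tau_{\beta/\alpha}(\gamma_a) \overset{d}{=} \gamma_{a\beta/\alpha}$ (which I would verify by comparing Laplace transforms).

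For (i), the key calculation is $\hat{h}_{\alpha}(y) = \mathbb{E}_{\alpha}[r^{[1]}_{\beta,\zeta}(T_{\alpha} y^{1/\alpha})]$: since $r^{[1]}_{\beta,\zeta}(t)$ is a constant multiple of $t e^{-\zeta^{1/\beta} t}$, this reduces to $-\partial_\lambda e^{-\lambda^{\alpha}} = \alpha \lambda^{\alpha-1} e^{-\lambda^{\alpha}}$ at $\lambda = \zeta^{1/\beta} y^{1/\alpha}$, and after cancelling powers of $\zeta$ and $y$ it equals $r^{[1]}_{\beta/\alpha,\zeta}(y)$. Part (ii) follows by dividing $r^{[1]}_{\beta,\zeta}(s y^{1/\alpha})$ by $\hat{h}_{\alpha}(y)$: the $\zeta$-dependent prefactors collapse and the exponents recombine into $-(\zeta^{\alpha/\beta} y)^{1/\alpha} s + \zeta^{\alpha/\beta} y$, identifying the ratio with $r^{[1]}_{\alpha,\zeta^{\alpha/\beta} y}(s)$. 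For (iii), integrating $\mathbb{P}^{[1]}_{\alpha}(\zeta^{\alpha/\beta} y)$ against the inverse--local-time density $r^{[1]}_{\beta/\alpha,\zeta}(y) f_{\beta/\alpha}(y)$ and applying the identity $\zeta^{\alpha/\beta} \hat{T}^{[1]}_{1,\beta/\alpha}(\zeta) \overset{d}{=} \tau_{\beta/\alpha}(\zeta + \gamma_{(\alpha-\beta)/\beta})$ produces $P^{[1]}_{\alpha}(\tau_{\beta/\alpha}(\zeta + \gamma_{(\alpha-\beta)/\beta}))$; (iv) is the joint statement assembled from (i)--(iii), and (v) is (iii) rewritten via the identification $\tilde{V} = \mathrm{FRAG}_{\alpha,-\beta}(V)$ from Proposition~\ref{propcoag}(i).

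For (vi), I would integrate the mixing density $r^{[1]}_{\beta,\zeta}(t) f_{\beta}(t)$ against a $\mathrm{Gamma}((\theta+\beta)/\beta, 1)$ density in $\zeta$; with the substitution $u = \zeta^{1/\beta}$ the $\zeta$-integral evaluates to $\Gamma(\theta+1) t^{-\theta-1}$, and $\mathbb{E}[T_{\beta}^{-\theta}] = \Gamma(\theta/\beta+1)/\Gamma(\theta+1)$ supplies the correct normalization, yielding the $\mathrm{PD}(\beta,\theta)$ mixing density $t^{-\theta} f_{\beta}(t) / \mathbb{E}[T_{\beta}^{-\theta}]$. For (vii), applying (v) to $V \sim \mathrm{PD}(\beta,\theta) \overset{d}{=} P^{[1]}_{\beta}(\gamma_{(\theta+\beta)/\beta})$ with the independent $\gamma_{(\alpha-\beta)/\beta}$ from the subordinator construction, combined with $\gamma_a + \gamma_b \overset{d}{=} \gamma_{a+b}$ and $\tau_{\beta/\alpha}(\gamma_a) \overset{d}{=} \gamma_{a\beta/\alpha}$, shows that the mixing variable for $\tilde{V}$ becomes $\gamma_{(\theta+\alpha)/\alpha}$; (vi) at index $\alpha$ then gives $\tilde{V} \sim \mathrm{PD}(\alpha,\theta)$. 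The marginal of $Q$ is $P^{[1]}_{\beta/\alpha}(\gamma_{(\theta+\beta)/\beta}) \sim \mathrm{PD}(\beta/\alpha,\theta/\alpha)$ by (i) and (vi) at index $\beta/\alpha$.

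The main obstacle is the \emph{independence} of $\tilde{V}$ and $Q$ asserted in (vii). Proposition~\ref{propcoag} only delivers the joint law $\mathrm{P}^{\beta/\alpha}_{\alpha}(h)$, which is generically dependent, so one cannot simply cite the marginals. I would resolve this by tracking the underlying subordinator construction: under $\zeta = \gamma_{(\theta+\beta)/\beta}$, the coagulator $Q$ is built from $\tau_{\beta/\alpha}$ evaluated at this gamma variable, while $\tilde{V}$ can be represented using an \emph{additional} independent $\gamma_{(\alpha-\beta)/\beta}$, and the classical coag/frag duality of~\cite{Pit99Coag} guarantees the two components decouple. Everything else is a manipulation of explicit densities and Laplace transforms.
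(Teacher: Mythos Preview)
Your approach to (i)--(vi) is essentially the paper's: apply Proposition~\ref{propcoag} with $h=r^{[1]}_{\beta,\zeta}$, compute $\hat h_\alpha$ and $h^{(y)}_\alpha$ explicitly (which you do correctly), read off the representation of $\hat T_1$ from the generalized--gamma construction, and then randomize $\zeta$ by a Gamma variable. Your calculations in (i), (ii), and (vi) check out line by line, and the subordinator identity $\tau_{\beta/\alpha}(\gamma_a)\overset{d}{=}\gamma_{a\beta/\alpha}$ is correct and easily verified from Laplace transforms.

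The one genuine issue is your independence argument in (vii). You propose to ``resolve this by tracking the underlying subordinator construction'' and then invoke ``the classical coag/frag duality of~\cite{Pit99Coag}'' to conclude that $\tilde V$ and $Q$ decouple. That is circular: the stated purpose of (vii) in this section is precisely to \emph{recover} the duality of~\cite{Pit99Coag} for all $\theta>-\beta$, so you cannot cite it as input. The paper avoids this by a much more direct route (implicit in the Remark following the definition of $\mathrm P^{\beta/\alpha}_\alpha(h)$, and attributed to~\cite[Proposition~21]{PY97} plus beta--gamma algebra): since $h\mapsto \mathrm P^{\beta/\alpha}_\alpha(h)$ is linear, randomizing $\zeta$ by $\gamma_{(\theta+\beta)/\beta}$ replaces $r^{[1]}_{\beta,\zeta}$ by its mixture, which by your own (vi) computation equals $t^{-\theta}/\mathbb E[T_\beta^{-\theta}]$. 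But then $h(sy^{1/\alpha})=c\,s^{-\theta}y^{-\theta/\alpha}$ factors, so the joint density in~(\ref{jointmeasure2}) is a product and independence is immediate. Replace your appeal to~\cite{Pit99Coag} by this factorization observation and the argument is complete.
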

\begin{proof}The results follow from a straightforward application of Proposition~\ref{propcoag} using $h(t)=r^{[1]}_{\beta,\zeta}(t),$ the distributional representation of $\hat{T}_{1}$, and the appropriate Gamma randomization to obtain independent $\mathrm{PD}$ laws. The generalized gamma subordinator representation of  $\hat{T}_{1}$ and Poisson Dirichlet distributional identities can be found in~\cite{HJL2,JamesStick,JamesFragblock}. The independence of the Poisson-Dirichlet laws is due to~\cite[Proposition 21, see p. 877]{PY97} and beta-gamma algebra, see also~\cite[Proposition 2.1]{HJL2}.  
\end{proof}
$P^{[1]}_{\alpha}\bigg(\tau_{\frac{\beta}{\alpha}}\big(\zeta+\gamma_{\frac{\alpha-\beta}{\beta}}\big)\bigg)=\mathrm{FRAG}_{\alpha,-\beta}\left(P^{[1]}_{\beta}(\zeta)\right)$ has distribution
$$
\mathbb{E}\left[\mathbb{P}^{[1]}_{\alpha}\bigg(\tau_{\frac{\beta}{\alpha}}\big(\zeta+\gamma_{\frac{\alpha-\beta}{\beta}}\big)\bigg)\right]
$$
with inverse local time at $1,$ equivalent in distribution to
$$
\hat{T}^{[1]}_{1,\alpha}\bigg(\tau_{\frac{\beta}{\alpha}}\big(\zeta+\gamma_{\frac{\alpha-\beta}{\beta}}\big)\bigg)\overset{d}=
\frac{\tau_{\alpha}\left(\tau_{\frac{\beta}{\alpha}}\big(\zeta+\gamma_{\frac{\alpha-\beta}{\beta}}\big)+\gamma_{\frac{1-\alpha}{\alpha}}\right)}{{[\tau_{\frac{\beta}{\alpha}}\big(\zeta+\gamma_{\frac{\alpha-\beta}{\beta}}\big)]}^{\frac{1}{\alpha}}}.
$$

\begin{rem}\label{remgengam}If $V_{\alpha}\sim \mathrm{PD}(\alpha,0)$ independent of $Q_{\frac{\beta}{\alpha}}\sim \mathrm{PD}(\frac{\beta}{\alpha},0)$, it is evident that $\big(V_{\alpha},Q_{\frac{\beta}{\alpha}}\big)\left|N\left(\zeta^{\frac{1}{\beta}}T_{\alpha}\big(T_{\frac{\beta}{\alpha}}(1)\big)\right)\right.=m\sim \mathrm{P}^{\frac{\beta}{\alpha}}_{\alpha}\left(r^{[m]}_{\beta,\zeta}\right).$
\end{rem}

\begin{appendix}
\section*{}

The $\mathrm{EPPF}$ of the $\mathrm{PD}_{\alpha|\beta}(\alpha|y)$ Gibbs partition of~$[n]$ in Theorem~\ref{TheoremGibbsparitionFrag} may be alternatively expressed in terms of Fox-$H$ functions~\cite{Mathai} as
$$
\frac{\alpha\Hfun{0,2}{2,2}{y}{\left(1-\frac{1}{\beta},\frac{1}{\beta}\right), \left(1-\frac{1}{\alpha}-k,\frac{1}{\alpha}\right)}
{\left(1-\frac{1}{\alpha},\frac{1}{\alpha}\right),(-n,1)}}
{\Hfun{0,1}{1,1}{y}{\left(1-\frac{1}{\beta},\frac{1}{\beta}\right)}
{\left(0,1\right)}}\frac{\Gamma(n)}{\Gamma(k)}p_{\alpha}(n_{1},\ldots,n_{k})
.
$$
The above expression follows by noting the Fox-$H$ representations for $f_{\beta/\alpha}$ and $f^{(n-k\alpha)}_{\alpha,k\alpha},$ followed by applying~\cite[Theorem 4.1]{CarterSpringer}. Otherwise details are similar to the arguments in \cite{HJL}.
\end{appendix}

\begin{funding}
L.F. James was supported in part by grants RGC-GRF 16301521, 16300217 and 601712 of the Research Grants Council (RGC) of the Hong Kong SAR. 
\end{funding}


\end{document}